\newtheorem{thm}{Theorem}[section]
\newtheorem{cor}[thm]{Corollary}
\newtheorem{prop}[thm]{Proposition}
\newtheorem{lem}[thm]{Lemma}
\theoremstyle{remark}
\newtheorem{rem}[thm]{Remark}
\theoremstyle{definition}
\newtheorem{dfn}[thm]{Definition}
\newtheorem{ex}[thm]{Example}
\newtheorem{fact}[thm]{Fact}
\newtheorem{const}[thm]{Construction}
\numberwithin{equation}{section}
\numberwithin{thm}{section}
\begin{document}

\title[General heart construction on a triangulated category (I)]{General heart construction on a triangulated category (I): unifying $t$-structures and cluster tilting subcategories}

\author{Hiroyuki NAKAOKA}
\address{Graduate School of Mathematical Sciences, The University of Tokyo 
3-8-1 Komaba, Meguro, Tokyo, 153-8914 Japan}
\email{deutsche@ms.u-tokyo.ac.jp}

\thanks{The author wishes to thank Professor Toshiyuki Katsura for his encouragement}
\thanks{The author wishes to thank his colleague Professor Noriyuki Abe. This work was never possible without his advices.}
\thanks{The author wishes to thank Professor Bernhard Keller and Professor Osamu Iyama for their useful comments and advices, especially on the terminology.}

\begin{abstract}
In the paper of Keller and Reiten, it was shown that the quotient of a triangulated category (with some conditions) by a cluster tilting subcategory becomes an abelian category.
After that, Koenig and Zhu showed in detail, how the abelian structure is given on this quotient category, in a more abstract setting.
On the other hand, as is well known since 1980s, the heart of any $t$-structure is abelian.
We unify these two construction by using the notion of a cotorsion pair.
To any cotorsion pair in a triangulated category, we can naturally associate an abelian category, which gives back each of the above two abelian categories, when the cotorsion pair comes from a cluster tilting subcategory, or is a $t$-structure, respectively.
\end{abstract}

\maketitle

\section{Introduction}
Throughout this paper, we fix a triangulated category $\mathcal{C}$.

For any category $\mathcal{K}$, we write abbreviately $K\in\mathcal{K}$, to indicate that $K$ is an object of $\mathcal{K}$.

For any $K,L\in\mathcal{K}$, let $\mathcal{K}(K,L)$ denote the set of morphisms from $K$ to $L$.

If $\mathcal{M},\mathcal{N}$ are full subcategories of $\mathcal{K}$, then $\mathcal{K}(\mathcal{M},\mathcal{N})=0$ means that $\mathcal{K}(M,N)=0$ for any $M\in\mathcal{M}$ and $N\in\mathcal{N}$.

Similarly, $\mathcal{K}(K,\mathcal{N})=0$ means $\mathcal{K}(K,N)=0$ for any $N\in\mathcal{N}$.

When $\mathcal{K}$ admits $\mathrm{Ext}^{\ell}$, similarly for $\mathrm{Ext}^{\ell}(\mathcal{M},\mathcal{N})=0$ and $\mathrm{Ext}^{\ell}(K,\mathcal{N})=0$ for any integer $\ell$.

\vspace{0.2cm}

As is well known, if $(\mathcal{T}^{\le0},\mathcal{T}^{\ge0})$ is a $t$-structure on $\mathcal{C}$, then its heart $\mathcal{H}=\mathcal{T}^{\le0}\cap\mathcal{T}^{\ge0}$ becomes an abelian category \cite{BBD}. Put $\mathcal{T}^{\le n}:=\mathcal{T}^{\le0}[-n],\mathcal{T}^{\ge n}:=\mathcal{T}^{\ge0}[-n]$ for an integer $n$.
By definition, a $t$-structure is a pair of full additive thick subcategories $(\mathcal{T}^{\le0},\mathcal{T}^{\ge0})$ of $\mathcal{C}$ satisfying

$(t\text{-}1)$ $\mathcal{C}(\mathcal{T}^{\le0},\mathcal{T}^{\ge1})=0$, 

$(t\text{-}2)$ $\mathcal{T}^{\le0}\subseteq\mathcal{T}^{\le1}$ and $\mathcal{T}^{\ge1}\subseteq\mathcal{T}^{\ge0}$,

$(t\text{-}3)$ For any $C\in\mathcal{C}$, there exists a distinguished triangle
\[ X\rightarrow C\rightarrow Y\rightarrow X[1]\quad(X\in\mathcal{T}^{\le0},Y\in\mathcal{T}^{\ge1}). \]

On the other hand, in \cite{K-Z}, Koenig and Zhu showed that for any cluster tilting subcategory $\mathcal{T}$ of $\mathcal{C}$, the quotient category $\mathcal{C}/\mathcal{T}$ carries a naturally induced abelian structure. 
Originally, an equivalence between the quotient of a triangulated category and a certain module category was shown in \cite{B-M-R} for the quotient by a full additive thick subcategory associated to a tilting object in $\mathcal{C}$, and then in \cite{K-R}, for the quotients by a cluster tilting subcategory.

By definition (\cite{K-R}, \cite{K-Z}), a full additive thick subcategory $\mathcal{T}$ of $\mathcal{C}$ is a cluster tilting subcategory if it satisfies

$(s\text{-}1)$ $\mathcal{T}$ is functorially finite (cf.\,\cite{B-R}),

$(s\text{-}2)$ $C\in\mathcal{T}$ if and only if $\mathrm{Ext}^1(C,\mathcal{T})=0$,

$(s\text{-}3)$ $C\in\mathcal{T}$ if and only if $\mathrm{Ext}^1(\mathcal{T},C)=0$.

By $(s\text{-}1)$, it can be easily shown that a cluster tilting subcategory also satisfies the following for any $C\in\mathcal{C}$:\\
$(s\text{-}1)^{\prime}$ For any $C\in\mathcal{C}$, there exists a distinguished triangle
\[ T\rightarrow C\rightarrow T^{\prime}[1]\rightarrow T[1]\quad(T,T^{\prime}\in\mathcal{T}). \]

To unify these two, we introduce the notion of a {\it cotorsion pair} (Definition \ref{DefDivide}). To any cotorsion pair $(\mathcal{U},\mathcal{V})$ in $\mathcal{C}$, we can naturally associate a subfactor category $\underline{\mathcal{H}}:=(\underline{\mathcal{C}}^+\cap\underline{\mathcal{C}}^-)/(\mathcal{U}\cap\mathcal{V})$.
As a main theorem, we show $\underline{\mathcal{H}}$ carries an induced abelian structure (Theorem \ref{ThmHeartAbel}).
In fact, this construction generalizes the above abelian categories, in the following sense (Proposition \ref{PropDivide} and Example \ref{ExHeart}):\\
\-- If $(\mathcal{U},\mathcal{V})$ satisfies $\mathcal{V}\subseteq\mathcal{V}[1]$, then $(\mathcal{U}[-1],\mathcal{V}[1])$ becomes a $t$-structure, and $\underline{\mathcal{H}}$ agrees with the heart of $(\mathcal{U}[-1],\mathcal{V})$.\\
\-- If $(\mathcal{U},\mathcal{V})$ satisfies $\mathcal{U}=\mathcal{V}$, then $\mathcal{T}:=\mathcal{U}=\mathcal{V}$ becomes a cluster tilting subcategory of $\mathcal{C}$, and $\underline{\mathcal{H}}$ agrees with $\mathcal{C}/\mathcal{T}$.
\section{Preliminaries}

\begin{dfn}\label{DefDivide}
Let $\mathcal{U}$ and $\mathcal{V}$ be full additive thick subcategories of $\mathcal{C}$.
We call $(\mathcal{U},\mathcal{V})$ a {\it cotorsion pair} if it satisfies the following.
\begin{enumerate}
\item $\mathrm{Ext}^1(\mathcal{U},\mathcal{V})=0$,
\item For any $C\in\mathcal{C}$, there exists a (not necessarily unique) distinguished triangle
\[ U\rightarrow C\rightarrow V[1]\rightarrow U[1] \]
satisfying $U\in\mathcal{U}$ and $V\in\mathcal{V}$.
\end{enumerate}
For any cotorsion pair $(\mathcal{U},\mathcal{V})$, put
\[ \mathcal{W}:=\mathcal{U}\cap\mathcal{V}. \]
\end{dfn}

\begin{rem} A pair $(\mathcal{U},\mathcal{V})$ of full additive thick subcategories of $\mathcal{C}$ is a cotorsion pair if and only if $(\mathcal{U}[-1],\mathcal{V})$ is a {\it torsion theory} (or a {\it torsion pair}) in \cite{I-Y}. (Unlike \cite{B-R}, it does not require the shift-closedness.)
In this sense, a cotorsion pair is nothing other than a torsion theory with $\mathcal{U}$ shifted by $-1$, and thus not a new notion. However we prefer the above definition, just for the sake of the duality in the index of shifts.
\end{rem}

\begin{rem} A pair $(\mathcal{U},\mathcal{V})$ of full additive subcategories of $\mathcal{C}$ is a cotorsion pair if and only if it satisfies the following conditions for any $C\in\mathcal{C}$.
\begin{enumerate}
\item $C$ belongs to $\mathcal{U}$ if and only if $\mathrm{Ext}^1(C,\mathcal{V})=0$,
\item $C$ belongs to $\mathcal{V}$ if and only if $\mathrm{Ext}^1(\mathcal{U},C)=0$,
\item For any $C\in\mathcal{C}$, there exists a (not necessarily unique) distinguished triangle
\[ U\rightarrow C\rightarrow V[1]\rightarrow U[1] \]
satisfying $U\in\mathcal{U}$ and $V\in\mathcal{V}$.
\end{enumerate}
\end{rem}

\begin{rem}\label{RemDivide}
Let $(\mathcal{U},\mathcal{V})$ be a cotorsion pair in $\mathcal{C}$.
\begin{enumerate}
\item For any $C\in\mathcal{C}$ and any $n\in\mathbb{Z}$, $C$ admits a distinguished triangle
\[ U[n]\rightarrow C\rightarrow V[n+1]\rightarrow U[n+1] \]
with $U\in\mathcal{U},V\in\mathcal{V}$.

\item For any $n\in\mathbb{Z}$, each $\mathcal{U}[n]$ and $\mathcal{V}[n]$ is closed under extensions and direct summands.

\item $\mathcal{W}$ becomes an additive full subcategory of $\mathcal{C}$, closed under direct sums and direct summands, satisfying
\[ \mathrm{Ext}^1(\mathcal{W},\mathcal{W})=0. \]
\end{enumerate}
\end{rem}

\begin{ex}\label{ExDivide}
$\ \ $
\begin{enumerate}
\item If $(\mathcal{T}^{\le0},\mathcal{T}^{\ge0})$ is a $t$-structure on $\mathcal{C}$, then
\[ (\mathcal{U},\mathcal{V}):=(\mathcal{T}^{\le-1},\mathcal{T}^{\ge1}) \]
becomes a cotorsion pair. In this case, $\mathcal{W}=0$.

\item If $\mathcal{T}$ is a cluster tilting subcategory of $\mathcal{C}$, then
\[ (\mathcal{U},\mathcal{V}):=(\mathcal{T},\mathcal{T}) \]
becomes a cotorsion pair. In this case, $\mathcal{W}=\mathcal{T}$.
\end{enumerate}
\end{ex}

Remark that $\mathrm{Ext}^1(\mathcal{U},\mathcal{V})=0$ implies $\mathcal{U}\cap\mathcal{V}[1]=0$. The following proposition shows that $t$-structures and cluster tilting subcategories are characterized as two extremal examples of cotorsion pairs.

\begin{prop}\label{PropDivide}
Let $(\mathcal{U},\mathcal{V})$ be a cotorsion pair.
\begin{enumerate}
\item $\mathcal{V}\subseteq \mathcal{V}[1]$ if and only if $(\mathcal{U},\mathcal{V})=(\mathcal{T}^{\le-1},\mathcal{T}^{\ge1})$ for a $t$-structure $(\mathcal{T}^{\le0},\mathcal{T}^{\ge0})$.

\item $\mathcal{V}=\mathcal{U}$ if and only if $(\mathcal{U},\mathcal{V})=(\mathcal{T},\mathcal{T})$ for a cluster tilting subcategory $\mathcal{T}$.
\end{enumerate}
\end{prop}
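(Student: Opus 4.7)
Both $(\Leftarrow)$ directions are immediate: for (1), axiom $(t\text{-}2)$ forces $\mathcal{V} = \mathcal{T}^{\ge 1} \subseteq \mathcal{T}^{\ge 0} = \mathcal{V}[1]$, while for (2), Example \ref{ExDivide}(2) already records that $(\mathcal{T}, \mathcal{T})$ is a cotorsion pair when $\mathcal{T}$ is cluster tilting, with $\mathcal{U} = \mathcal{V}$ tautologically. The content lies in the forward implications, and in both cases my strategy is to read off the defining axioms of the target structure directly from the cotorsion-pair axioms, using the Ext-vanishing characterization of a cotorsion pair in the unlabeled remark following Definition \ref{DefDivide}.

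For (1)$(\Rightarrow)$, the identification $\mathcal{U} = \mathcal{T}^{\le -1}$, $\mathcal{V} = \mathcal{T}^{\ge 1}$ forces $\mathcal{T}^{\le 0} := \mathcal{U}[-1]$ and $\mathcal{T}^{\ge 0} := \mathcal{V}[1]$, which automatically inherit full additive thickness from $\mathcal{U}$ and $\mathcal{V}$. Axiom $(t\text{-}1)$ translates to $\mathrm{Ext}^1(\mathcal{U}, \mathcal{V}) = 0$ (part of the cotorsion-pair hypothesis); axiom $(t\text{-}3)$ is precisely the case $n = -1$ of Remark \ref{RemDivide}(1); and $\mathcal{T}^{\ge 1} \subseteq \mathcal{T}^{\ge 0}$ is literally the hypothesis $\mathcal{V} \subseteq \mathcal{V}[1]$. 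The only real computation is the remaining inclusion $\mathcal{U}[1] \subseteq \mathcal{U}$: the hypothesis is equivalent to $\mathcal{V}[-1] \subseteq \mathcal{V}$, which yields $\mathcal{C}(U, V) = \mathrm{Ext}^1(U, V[-1]) = 0$ for all $U \in \mathcal{U}$, $V \in \mathcal{V}$, and therefore $\mathrm{Ext}^1(U[1], V) = \mathcal{C}(U, V) = 0$; the Ext-characterization then places $U[1]$ in $\mathcal{U}$.

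For (2)$(\Rightarrow)$, set $\mathcal{T} := \mathcal{U} = \mathcal{V}$. Axioms $(s\text{-}2)$ and $(s\text{-}3)$ are simply conditions (1) and (2) of the Ext-characterization specialized to $\mathcal{U} = \mathcal{V} = \mathcal{T}$. For functorial finiteness $(s\text{-}1)$, the cotorsion-pair triangle $T_1 \to C \to T_2[1] \to T_1[1]$ has $T_1, T_2 \in \mathcal{T}$, and applying $\mathcal{C}(T, -)$ for $T \in \mathcal{T}$ makes $T_1 \to C$ a right $\mathcal{T}$-approximation because the obstruction lies in $\mathrm{Ext}^1(T, T_2) = 0$. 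For a left approximation, apply the cotorsion pair to $C[1]$ and shift by $-1$ to obtain $U[-1] \to C \to V \to U$ with $U, V \in \mathcal{T}$; then $\mathcal{C}(U[-1], T) = \mathrm{Ext}^1(U, T) = 0$ makes $C \to V$ a left $\mathcal{T}$-approximation. The single step not reducing to pure bookkeeping is the shift-closedness $\mathcal{U}[1] \subseteq \mathcal{U}$ in part (1), where the asymmetric hypothesis $\mathcal{V} \subseteq \mathcal{V}[1]$ must be converted into the symmetric vanishing $\mathcal{C}(\mathcal{U}, \mathcal{V}) = 0$; every other verification is formal rotation of triangles combined with the Ext-characterization.
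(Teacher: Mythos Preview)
Your proof is correct and follows essentially the same approach as the paper. For part (1) the paper simply asserts that the claim ``immediately follows from the definition of a $t$-structure,'' and your argument is precisely the honest unwinding of that assertion, including the one substantive step $\mathcal{U}[1]\subseteq\mathcal{U}$. For part (2) there is a minor procedural difference: the paper invokes a lemma from \cite{K-Z} to reduce to checking only contravariant finiteness of $\mathcal{T}$, whereas you verify both left and right $\mathcal{T}$-approximations directly from the cotorsion-pair triangles; this makes your argument more self-contained but is not a different idea.
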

\begin{proof}
{\rm (1)} This immediately follows from the definition of a $t$-structure.

{\rm (2)} The \lq only if' part is trivial. To show the converse, put $\mathcal{T}:=\mathcal{U}=\mathcal{V}$.
As in \cite{K-Z} (Definition 3.1 and Lemma 3.2), it suffices to show $\mathcal{T}$ is contravariantly finite.
But this immediately follows from the fact that any object $C\in\mathcal{C}$ admits a distinguished triangle
\[ T\rightarrow C\rightarrow T^{\prime}[1]\rightarrow T[1] \]
for some $T\in\mathcal{U}=\mathcal{T}$ and $T^{\prime}\in\mathcal{V}=\mathcal{T}.$
\end{proof}

\begin{dfn}(cf. \S\S 2.1 in \cite{K-Z})
Let $(\mathcal{U},\mathcal{V})$ be a cotorsion pair in $\mathcal{C}$, and $\mathcal{W}:=\mathcal{U}\cap\mathcal{V}$.
We denote the quotient of $\mathcal{C}$ by $\mathcal{W}$ as
\[ \underline{\mathcal{C}}:=\mathcal{C}/\mathcal{W}. \]

Namely, $\mathrm{Ob}(\underline{\mathcal{C}})=\mathrm{Ob}(\mathcal{C})$, and for any $A,B\in\mathcal{C}$,
\[ \underline{\mathcal{C}}(A,B):=\mathcal{C}(A,B)/\{ f\in\mathcal{C}(A,B)\mid f\ \text{factors through some}\ W\in\mathcal{W} \}. \]

For any morphism $f\in\mathcal{C}(A,B)$, we denote its image in $\underline{\mathcal{C}}(A,B)$ by $\underline{f}$.
This defines an additive functor
\[ \underline{(\ \ )}\colon\mathcal{C}\rightarrow \underline{\mathcal{C}}\ . \]

Since $\mathcal{U}\supseteq\mathcal{W}$ and $\mathcal{V}\supseteq\mathcal{W}$, we also have additive full subcategories of $\underline{\mathcal{C}}$
\[ \underline{\mathcal{U}}:=\mathcal{U}/\mathcal{W}\ \ \text{and}\ \ \underline{\mathcal{V}}:=\mathcal{V}/\mathcal{W}. \]
\end{dfn}

\begin{rem}\label{RemQuotW}
Since $\mathcal{W}$ is closed under direct summands, for any $C\in\mathcal{C}$ we have
\[ C\cong 0\ \ \text{in}\ \  \underline{\mathcal{C}}\ \ \Longleftrightarrow\ \ C\in\mathcal{W}. \]
\end{rem}

Since $\mathrm{Ext}^1(\mathcal{W},\mathcal{W})=0$, Theorem 2.3 in \cite{K-Z} can be applied. Compare with Proposition \ref{PropEpim}.
\begin{fact}\label{FactFact}(Theorem 2.3 in \cite{K-Z})
Let $\mathcal{C},(\mathcal{U},\mathcal{V})$ and $\mathcal{W}$ be as above.
For any distinguished triangle
\[ X\overset{f}{\longrightarrow}Y\overset{g}{\longrightarrow}Z \overset{h}{\longrightarrow}X[1], \]
the following hold.
\begin{enumerate}
\item $\underline{g}$ is epimorphic in $\underline{\mathcal{C}}$ if and only if $\underline{h}=0$,
\item $\underline{g}$ is monomorphic in $\underline{\mathcal{C}}$ if and only if $\underline{f}=0$.
\end{enumerate}
\end{fact}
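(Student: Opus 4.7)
The plan is to prove (1); statement (2) then follows by duality, applying (1) in the opposite triangulated category $\mathcal{C}^{\mathrm{op}}$ (with the cotorsion pair $(\mathcal{V},\mathcal{U})$ and the rotated triangle), where the notions of epi and mono are interchanged. So I focus on (1).

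The direction $\underline{g}$ epi $\Rightarrow \underline{h}=0$ is immediate: $h\circ g=0$ in $\mathcal{C}$, hence $\underline{h}\,\underline{g}=0$ in $\underline{\mathcal{C}}$, and epi-ness of $\underline{g}$ cancels it.

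For the converse, assume $\underline{h}=0$ and write $h=\beta\alpha$ with $\alpha\colon Z\to W$, $\beta\colon W\to X[1]$, $W\in\mathcal{W}$. To show $\underline{g}$ is epi, take any $a\colon Z\to A$ with $\underline{a}\,\underline{g}=0$, and choose a factorization $ag=\mu\nu$ with $\nu\colon Y\to W'$, $\mu\colon W'\to A$, $W'\in\mathcal{W}$. The goal is to factor $a$ itself through an object of $\mathcal{W}$. My proposed strategy has two steps. First, I replace the factorization $(\mu,\nu)$ by one $(\mu_0,\nu_0)$ through some $W_0\in\mathcal{W}$ satisfying the extra condition $\nu_0\circ f=0$. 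The obstruction is $\nu f\colon X\to W'$: it lies in the kernel of $\mu_*$ since $\mu\nu f=agf=0$. Completing $\mu$ to a triangle $K\to W'\xrightarrow{\mu} A\to K[1]$, we get $\nu f=k\xi$ for some $\xi\colon X\to K$; the obstruction to lifting $\xi$ along $f$ lives in $\mathcal{C}(Z[-1],K)$, and its image in $\mathrm{Ext}^1(Z,W')$ routes through $\mathrm{Ext}^1(W,W')$ via $h=\beta\alpha$. Since $W\in\mathcal{U}$ and $W'\in\mathcal{V}$, this group vanishes; combined with $\mathrm{Ext}^1(\mathcal{W},\mathcal{W})=0$, this produces the desired correction of $\nu$ by a morphism factoring through $\mathcal{W}$.

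Once $\nu_0 f=0$, the exact sequence $\mathcal{C}(Z,W_0)\xrightarrow{g^*}\mathcal{C}(Y,W_0)\xrightarrow{f^*}\mathcal{C}(X,W_0)$ gives $\nu_0=\nu_0'g$ for some $\nu_0'\colon Z\to W_0$. Then $(a-\mu_0\nu_0')g=0$, and the exact sequence from the rotated triangle yields $a-\mu_0\nu_0'=e\circ h=e\beta\alpha$ for some $e\colon X[1]\to A$. Both summands factor through $\mathcal{W}$ (through $W_0$ and $W$ respectively), hence $a$ factors through $W_0\oplus W\in\mathcal{W}$, so $\underline{a}=0$. The main obstacle is the first step: manufacturing the adjustment that ensures $\nu_0 f=0$. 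This is precisely where the hypothesis $\mathrm{Ext}^1(\mathcal{W},\mathcal{W})=0$ enters essentially, and a careful diagram chase (or an application of the octahedral axiom to the composite $h=\beta\alpha$) is needed to carry it through cleanly.
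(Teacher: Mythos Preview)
The paper does not prove this statement; it is quoted as a fact (Theorem~2.3 of \cite{K-Z}) and used without argument. So there is no in-paper proof to compare against.

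Your overall strategy is sound, and Step~2 is correct as written. Step~1, however, is only sketched, and the justification you offer is not quite right: pushing the obstruction $\xi\circ h[-1]\in\mathcal{C}(Z[-1],K)$ forward along $k$ into $\mathrm{Ext}^1(Z,W')$ yields $\nu f\circ h[-1]$, which vanishes simply because $f\circ h[-1]=0$ in the triangle; this neither shows that $\xi\circ h[-1]$ itself vanishes nor uses $\mathrm{Ext}^1(\mathcal{W},\mathcal{W})=0$. One clean way to complete Step~1 is to work with the triangle on $\beta$ rather than on $\mu$: write $X\xrightarrow{\gamma} M\xrightarrow{\delta} W\xrightarrow{\beta} X[1]$. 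Since $\nu f\circ\beta[-1]\in\mathcal{C}(W[-1],W')=\mathrm{Ext}^1(W,W')=0$, the map $\nu f$ factors as $\rho\gamma$ for some $\rho\colon M\to W'$. Now $\gamma\circ h[-1]=\gamma\,\beta[-1]\,\alpha[-1]=0$ (consecutive maps in a triangle), so $\gamma$ extends along $f$ to some $\gamma'\colon Y\to M$. From $\mu\rho\gamma=\mu\nu f=agf=0$ we obtain $\mu\rho=\sigma\delta$ for some $\sigma\colon W\to A$. Taking $W_0=W'\oplus W$, $\nu_0=\left(\begin{smallmatrix}\nu-\rho\gamma'\\ \delta\gamma'\end{smallmatrix}\right)$ and $\mu_0=(\mu,\sigma)$, one checks $\nu_0 f=0$ (using $\delta\gamma=0$) and $\mu_0\nu_0=ag$. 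Your Step~2 then finishes the argument.
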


Let $(\mathcal{U},\mathcal{V})$ be a cotorsion pair in the following.

\begin{prop}\label{PropVanishLift}
$\ \ $
\begin{enumerate}
\item For any $U\in\mathcal{U},C\in\mathcal{C}$ and any $f\in\mathcal{C}(U[-1],C)$, if $\underline{f}=0$, then $f=0$. Namely, we have
\[ \underline{\mathcal{C}}(U[-1],C)=\mathcal{C}(U[-1],C)=\mathrm{Ext}^1(U,C). \]In particular, $C\in\mathcal{V}$ if and only if $\underline{\mathcal{C}}(U[-1],C)=0$ for any $U\in\mathcal{U}$.
\item Dually, for any $V\in\mathcal{V}$ and any $C\in\mathcal{C}$, we have
\[ \underline{\mathcal{C}}(C,V[1])=\mathcal{C}(C,V[1])=\mathrm{Ext}^1(C,V). \]
In particular, $C\in\mathcal{U}$ if and only if $\underline{\mathcal{C}}(C,V[1])=0$ for any $V\in\mathcal{V}$.
\end{enumerate}
\end{prop}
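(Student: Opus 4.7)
The plan is to observe that (1) unpacks to a single $\mathrm{Ext}^1$-vanishing statement, and (2) is formally dual. Recall the convention $\mathrm{Ext}^1(X,Y)=\mathcal{C}(X,Y[1])$, so in particular $\mathcal{C}(U[-1],C)=\mathcal{C}(U,C[1])=\mathrm{Ext}^1(U,C)$ automatically, and the content of (1) is really that the projection $\mathcal{C}(U[-1],C)\to\underline{\mathcal{C}}(U[-1],C)$ is injective.

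For that, I would suppose $\underline{f}=0$, i.e.\ that $f\colon U[-1]\to C$ factors as $f=g\circ h$ with $h\colon U[-1]\to W$, $g\colon W\to C$, and $W\in\mathcal{W}=\mathcal{U}\cap\mathcal{V}$. The key point is that $h$ lives in $\mathcal{C}(U[-1],W)=\mathrm{Ext}^1(U,W)$, and since $U\in\mathcal{U}$ and $W\in\mathcal{V}$, the defining property $\mathrm{Ext}^1(\mathcal{U},\mathcal{V})=0$ of a cotorsion pair forces $h=0$, hence $f=0$. This yields the chain of equalities displayed in (1).

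For the "in particular" clause, I would combine what we just proved with the alternative characterisation recorded in the second remark after Definition \ref{DefDivide}: $C\in\mathcal{V}$ iff $\mathrm{Ext}^1(\mathcal{U},C)=0$, iff $\mathrm{Ext}^1(U,C)=0$ for all $U\in\mathcal{U}$, iff (by the chain of equalities just established) $\underline{\mathcal{C}}(U[-1],C)=0$ for all $U\in\mathcal{U}$.

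Part (2) is proved by the same argument with the arrows reversed: if $f\colon C\to V[1]$ factors as $C\overset{h}{\to} W\overset{g}{\to} V[1]$ with $W\in\mathcal{W}\subseteq\mathcal{U}$, then $g\in\mathcal{C}(W,V[1])=\mathrm{Ext}^1(W,V)=0$ because $W\in\mathcal{U}$ and $V\in\mathcal{V}$. The identification $\mathcal{C}(C,V[1])=\mathrm{Ext}^1(C,V)$ is by convention, and the characterisation of $\mathcal{U}$ uses the dual half of the same remark. There is no real obstacle here; the only thing to keep track of is which side of the cotorsion pair supplies the $\mathrm{Ext}^1$-vanishing against $\mathcal{W}$ in each direction.
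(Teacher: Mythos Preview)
Your proof is correct and follows exactly the same approach as the paper: factor $f$ through some $W\in\mathcal{W}$ and kill the map $U[-1]\to W$ using $\mathrm{Ext}^1(\mathcal{U},\mathcal{V})=0$. The only difference is that you spell out the ``in particular'' clause explicitly via the alternative characterisation of $\mathcal{V}$, whereas the paper leaves this implicit.
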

\begin{proof}
{\rm (1)} By definition, $\underline{f}=0$ if and only if $f$ factors through some $W\in\mathcal{W}$. Since $\mathcal{C}(U[-1],W)\subseteq\mathrm{Ext}^1(\mathcal{U},\mathcal{V})=0$, this implies $f=0$.
\[
\xy
(0,8)*+{W}="0";
(-12,-4)*+{U[-1]}="2";
(12,-4)*+{C}="4";
(0,-6)*+{}="6";
{\ar^{0} "2";"0"};
{\ar "0";"4"};
{\ar_{f} "2";"4"};
{\ar@{}|\circlearrowright "0";"6"};
\endxy
\]
{\rm (2)} is shown dually.
\end{proof}

\begin{lem}\label{LemDisApp}
For any cotorsion pair $(\mathcal{U},\mathcal{V})$, we have
\[ \underline{\mathcal{C}}(\mathcal{U},\mathcal{V})=0. \]
\end{lem}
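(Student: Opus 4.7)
The plan is to show that any morphism $f\colon U\to V$ with $U\in\mathcal{U}$, $V\in\mathcal{V}$ factors through some object of $\mathcal{W}=\mathcal{U}\cap\mathcal{V}$. To produce such a factorization I will use the cotorsion-pair resolution of $V$ itself, and then show that the middle term of that resolution lies in $\mathcal{W}$.

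Concretely, first I apply Definition \ref{DefDivide}(2) to $V$ to obtain a distinguished triangle
\[ U_V\overset{a}{\longrightarrow}V\overset{b}{\longrightarrow}V_V[1]\overset{c}{\longrightarrow}U_V[1] \]
with $U_V\in\mathcal{U}$ and $V_V\in\mathcal{V}$. Rotating this triangle gives
\[ V_V\longrightarrow U_V\overset{a}{\longrightarrow}V\longrightarrow V_V[1], \]
and since both $V_V$ and $V$ lie in $\mathcal{V}$, and $\mathcal{V}$ is closed under extensions by Remark \ref{RemDivide}(2), the middle term $U_V$ belongs to $\mathcal{V}$ as well. Combined with $U_V\in\mathcal{U}$, this yields $U_V\in\mathcal{W}$.

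Next, I apply the cohomological functor $\mathcal{C}(U,-)$ to the original triangle to get the exact sequence
\[ \mathcal{C}(U,U_V)\overset{a_\ast}{\longrightarrow}\mathcal{C}(U,V)\overset{b_\ast}{\longrightarrow}\mathcal{C}(U,V_V[1])=\mathrm{Ext}^1(U,V_V). \]
The right-hand term vanishes because $U\in\mathcal{U}$, $V_V\in\mathcal{V}$, and $\mathrm{Ext}^1(\mathcal{U},\mathcal{V})=0$ by the definition of a cotorsion pair. Hence $b_\ast f=0$, so $f$ factors as $f=a\circ f'$ for some $f'\colon U\to U_V$. Since $U_V\in\mathcal{W}$, this means $\underline{f}=0$ in $\underline{\mathcal{C}}$, which is exactly the claim.

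There is no real obstacle here; the main point is simply realizing that the ``$\mathcal{U}$-approximation'' of an object already lying in $\mathcal{V}$ is automatically in $\mathcal{W}$, after which the factorization is forced by the defining $\mathrm{Ext}^1$-vanishing. The same argument could alternatively be run using the cotorsion-pair triangle for $U$ and dualizing, which would give the symmetric factorization through a $\mathcal{W}$-object on the target side.
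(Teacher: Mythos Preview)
Your proof is correct and essentially identical to the paper's own argument: both take the cotorsion-pair triangle for $V$, observe via extension-closure of $\mathcal{V}$ that its $\mathcal{U}$-part lies in $\mathcal{W}$, and then use $\mathrm{Ext}^1(\mathcal{U},\mathcal{V})=0$ to force $f$ to factor through that object. The only cosmetic difference is that you phrase the factorization via the long exact sequence for $\mathcal{C}(U,-)$, whereas the paper just says ``$v\circ f=0$ so $f$ factors through $U'$''.
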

\begin{proof}
Let $f\in\mathcal{C}(U,V)$ be a morphism, where $U\in\mathcal{U}$ and $V\in\mathcal{V}$. By condition {\rm (2)} in Definition \ref{DefDivide}, we can form a distinguished triangle
\[ V^{\prime}\rightarrow U^{\prime}\overset{u}{\longrightarrow}V\overset{v}{\longrightarrow}V^{\prime}[1] \]
where $U^{\prime}\in\mathcal{U}$ and $V^{\prime}\in\mathcal{V}$. Since $\mathcal{V}$ is extension-closed (Remark \ref{RemDivide}), $U^{\prime}$ satisfies $U^{\prime}\in\mathcal{U}\cap\mathcal{V}=\mathcal{W}$.
Since $v\circ f=0$ by $\mathrm{Ext}^1(U,V^{\prime})=0$, $f$ factors through $U^{\prime}$.
\[
\xy
(-24,0)*+{V^{\prime}}="0";
(-8,0)*+{U^{\prime}}="2";
(8,0)*+{V}="4";
(24,0)*+{V^{\prime}[1]}="6";
(8,14)*+{U}="8";
(-2,10)*+{}="10";
{\ar "0";"2"};
{\ar "2";"4"};
{\ar^{f} "8";"4"};
{\ar_{v} "4";"6"};
{\ar@{-->} "8";"2"};
{\ar@{}|\circlearrowright "4";"10"};
\endxy
\]
Since $U^{\prime}\in\mathcal{W}$, this means $\underline{f}=0$.
\end{proof}

\section{Definition of $\mathcal{C}^+$ and $\mathcal{C}^-$}

\begin{lem}\label{LemUniqueUV}
Let $f\colon A\rightarrow B$ be any morphism in $\mathcal{C}$.
\begin{enumerate}
\item Let
\begin{eqnarray*}
U_A\overset{u_A}{\longrightarrow}A\rightarrow V_A[1]\rightarrow U_A[1]\\
U_B\overset{u_B}{\longrightarrow}B\rightarrow V_B[1]\rightarrow U_B[1]
\end{eqnarray*}
be any distinguished triangles satisfying $U_A,U_B\in\mathcal{U}$ and $V_A,V_B\in\mathcal{V}$.
Then there exists a morphism $f_U\in\mathcal{C}(U_A,U_B)$ such that
\begin{eqnarray*}
f\circ u_A=u_B\circ f_U.\\
\xy
(-8,8)*+{U_A}="0";
(-8,-8)*+{U_B}="2";
(8,8)*+{A}="4";
(8,-8)*+{B}="6";
{\ar_{f_U} "0";"2"};
{\ar_{u_B} "2";"6"};
{\ar^{u_A} "0";"4"};
{\ar^{f} "4";"6"};
{\ar@{}|\circlearrowright "0";"6"};
\endxy
\end{eqnarray*}
Moreover, $f_U$ with this property is unique in $\underline{\mathcal{C}}(U_A,U_B)$.
\item Dually, for any distinguished triangles
\begin{eqnarray*}
U^{\prime}_A[-1]\rightarrow A\rightarrow V^{\prime}_A\rightarrow U^{\prime}_A\\
U^{\prime}_B[-1]\rightarrow B\rightarrow V^{\prime}_B\rightarrow U^{\prime}_B
\end{eqnarray*}
with $U^{\prime}_A,U^{\prime}_B\in\mathcal{U}$ and $V^{\prime}_A,V^{\prime}_B\in\mathcal{V}$, there exists a morphism $f^{\prime}_V\in\mathcal{C}(V^{\prime}_A,V^{\prime}_B)$ compatible with $f$, uniquely up to $\mathcal{W}$.
\end{enumerate}
\end{lem}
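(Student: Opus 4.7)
The plan is to prove (1) directly and then obtain (2) by formal duality. Existence of $f_U$ will follow from the Ext-vanishing in the cotorsion pair together with the long exact sequence associated with the triangle for $B$. The main obstacle will be the uniqueness statement modulo $\mathcal{W}$, where I need to identify the ambiguity in the lift and show it is killed in $\underline{\mathcal{C}}$; for this I expect Lemma \ref{LemDisApp} to do the work.

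For existence, let $b\colon B\to V_B[1]$ denote the second arrow of the given triangle for $B$. Then $b\circ f\circ u_A\in\mathcal{C}(U_A,V_B[1])=\mathrm{Ext}^1(U_A,V_B)$, which vanishes by condition (1) of Definition \ref{DefDivide}. Applying the cohomological functor $\mathcal{C}(U_A,-)$ to the triangle for $B$, the resulting exact sequence yields an $f_U\in\mathcal{C}(U_A,U_B)$ with $u_B\circ f_U=f\circ u_A$.

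For uniqueness in $\underline{\mathcal{C}}(U_A,U_B)$, suppose $f_U$ and $f_U'$ are two such lifts, so $u_B\circ(f_U-f_U')=0$. Rotating the triangle for $B$ backwards produces a distinguished triangle $V_B\xrightarrow{d}U_B\xrightarrow{u_B}B\to V_B[1]$, and exactness of $\mathcal{C}(U_A,V_B)\xrightarrow{d_{\ast}}\mathcal{C}(U_A,U_B)\xrightarrow{(u_B)_{\ast}}\mathcal{C}(U_A,B)$ gives some $h\in\mathcal{C}(U_A,V_B)$ with $f_U-f_U'=d\circ h$. Since $\underline{\mathcal{C}}(\mathcal{U},\mathcal{V})=0$ by Lemma \ref{LemDisApp}, the morphism $h$ factors through an object of $\mathcal{W}$, and hence so does $f_U-f_U'$; therefore $\underline{f_U}=\underline{f_U'}$ in $\underline{\mathcal{C}}$.

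Part (2) is the formal dual. Using condition (1) of Definition \ref{DefDivide} one has $\mathrm{Ext}^1(U_A',V_B')=0$, so that applying $\mathcal{C}(-,V_B')$ to the given triangle for $A$ produces a morphism $f_V'\in\mathcal{C}(V_A',V_B')$ compatible with $f$, and the dual long-exact-sequence argument combined once more with Lemma \ref{LemDisApp} delivers the uniqueness modulo $\mathcal{W}$.
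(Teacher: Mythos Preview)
Your proof is correct and follows essentially the same approach as the paper: existence from $\mathcal{C}(U_A,V_B[1])=0$, and uniqueness by factoring the difference $f_U-f_U'$ through $V_B$ and invoking Lemma~\ref{LemDisApp} to conclude that this factorization (hence the difference) dies in $\underline{\mathcal{C}}$. The only cosmetic difference is that you phrase both steps via the long exact sequence of $\mathcal{C}(U_A,-)$, whereas the paper states them directly.
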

\begin{proof}
We only show {\rm (1)}. Existence immediately follows from $\mathcal{C}(U_A,V_B[1])=0$. Moreover if $f_U^1$ and $f_U^2$ in $\mathcal{C}(U_A,U_B)$ satisfies
\[ f_U^1\circ u_A=u_B\circ f_U=f^2_U\circ u_A, \]
then by $(f_U^1-f_U^2)\circ u_A=0$, there exists $w\in\mathcal{C}(U_A,V_B)$ such that $f_U^1-f_U^2$ factors $w$.
\[
\xy
(-24,-8)*+{V_B}="0";
(-8,-8)*+{U_B}="2";
(8,-8)*+{B}="4";
(24,-8)*+{V_B[1]}="6";
(-8,8)*+{U_A}="8";
(8,8)*+{A}="10";
(-18,4)*+{}="12";
{\ar "0";"2"};
{\ar "2";"4"};
{\ar "4";"6"};
{\ar^{f_U^1-f_U^2} "8";"2"};
{\ar^{f} "10";"4"};
{\ar@{-->}_{w} "8";"0"};
{\ar^{u_A} "8";"10"};
{\ar@{}|\circlearrowright "2";"12"};
\endxy
\]
By Lemma \ref{LemDisApp} we have $\underline{w}=0$, and thus $\underline{f_U^1}=\underline{f_U^2}$.
\end{proof}

\begin{prop}\label{PropUniqueUV}
Let $C$ be any object in $\mathcal{C}$.
\begin{enumerate}
\item For any distinguished triangles
\begin{eqnarray*}
U\overset{u}{\longrightarrow}C\rightarrow V[1]\rightarrow U[1]\\
U^{\prime}\overset{u^{\prime}}{\longrightarrow}C\rightarrow V^{\prime}[1]\rightarrow U^{\prime}[1]
\end{eqnarray*}
satisfying $U,U^{\prime}\in\mathcal{U}$ and $V,V^{\prime}\in\mathcal{V}$, there exists a morphism $s\in\mathcal{C}(U,U^{\prime})$ compatible with $u$ and $u^{\prime}$, such that $\underline{s}$ is an isomorphism.
\[
\xy
(0,8)*+{U}="0";
(0,-8)*+{U^{\prime}}="2";
{\ar_{\rotatebox{90}{$\cong$}}^{\underline{s}} "0";"2"};
\endxy
\qquad
\xy
(-4,8)*+{U}="0";
(-4,-8)*+{U^{\prime}}="2";
(6,0)*+{C}="4";
(-8,0)*+{}="6";
{\ar_{s} "0";"2"};
{\ar^{u} "0";"4"};
{\ar_{u^{\prime}} "2";"4"};
{\ar@{}|\circlearrowright "4";"6"};
\endxy
\]
\item Dually, those $V$ appearing in distinguished triangles
\[ U[-1]\rightarrow C\rightarrow V\rightarrow U\quad(U\in\mathcal{U},V\in\mathcal{V}) \]
are isomorphic in $\underline{\mathcal{C}}$.
\end{enumerate}
\end{prop}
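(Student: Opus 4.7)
The plan is to deduce this directly from Lemma \ref{LemUniqueUV}, by taking the morphism there to be the identity $\mathrm{id}_C\colon C\to C$.

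First I would apply Lemma \ref{LemUniqueUV}(1) to $f=\mathrm{id}_C$, with the source distinguished triangle being the first one and the target being the second. This yields a morphism $s\in\mathcal{C}(U,U')$ such that $u'\circ s=\mathrm{id}_C\circ u=u$. Swapping the roles of the two triangles, the same lemma applied to $\mathrm{id}_C$ in the reverse direction yields $s'\in\mathcal{C}(U',U)$ with $u\circ s'=u'$. This already gives the existence of $s$ compatible with $u$ and $u'$.

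Next I would check that $\underline{s}$ is an isomorphism in $\underline{\mathcal{C}}$ with inverse $\underline{s'}$. For this, observe that both $s\circ s'\in\mathcal{C}(U',U')$ and $\mathrm{id}_{U'}$ make the square
\[
u'\circ(s\circ s')=u\circ s'=u'=u'\circ\mathrm{id}_{U'}
\]
commute. Applying Lemma \ref{LemUniqueUV}(1) to $f=\mathrm{id}_C$ with both source and target triangles equal to the second one, the uniqueness clause forces $\underline{s\circ s'}=\underline{\mathrm{id}_{U'}}$ in $\underline{\mathcal{C}}(U',U')$. An analogous argument, using both source and target triangles equal to the first one, gives $\underline{s'\circ s}=\underline{\mathrm{id}_U}$. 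Since $\underline{(\ \ )}$ is a functor, we conclude that $\underline{s}$ is an isomorphism in $\underline{\mathcal{C}}$.

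Part (2) is strictly dual: apply Lemma \ref{LemUniqueUV}(2) to $\mathrm{id}_C$ to produce morphisms between the two $V$-terms in opposite directions, and run the same two-sided uniqueness argument to see that each composite agrees with the appropriate identity in $\underline{\mathcal{C}}$. I do not foresee a serious obstacle, since all the substantive work (existence and uniqueness up to $\mathcal{W}$ for lifts of a morphism) has already been packaged into Lemma \ref{LemUniqueUV}; the present proposition is really just the specialization of that lemma to $f=\mathrm{id}_C$ combined with a standard two-sided inverse argument.
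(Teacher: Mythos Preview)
Your argument is correct and is exactly what the paper has in mind: its proof reads ``This immediately follows from Lemma~\ref{LemUniqueUV},'' and you have simply spelled out the standard two-sided inverse argument obtained by specializing that lemma to $f=\mathrm{id}_C$.
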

\begin{proof}
This immediately follows from Lemma \ref{LemUniqueUV}.
\end{proof}

\begin{cor}\label{CorC+}
For any $C\in\mathcal{C}$, the following are equivalent.
\begin{enumerate}
\item There exists a distinguished triangle 
\[ W_0\rightarrow C\rightarrow V_0[1]\rightarrow W_0[1] \]
such that $W_0\in\mathcal{W},V_0\in\mathcal{V}$.\\
\item Any distinguished triangle 
\[ U\rightarrow C\rightarrow V[1]\rightarrow U[1]\quad (U\in\mathcal{U},V\in\mathcal{V}) \]
satisfies $U\in\mathcal{W}$.
\end{enumerate}
\end{cor}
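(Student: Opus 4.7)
The plan is to observe that this corollary is essentially a packaging of Proposition \ref{PropUniqueUV}(1) together with Remark \ref{RemQuotW}. Neither direction requires new work; the content is just that the ``$\mathcal{U}$-part'' of a $(\mathcal{U},\mathcal{V})$-triangle over $C$ is well-defined up to isomorphism in $\underline{\mathcal{C}}$, and membership in $\mathcal{W}$ is exactly being zero there.

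First I would handle the direction (2) $\Rightarrow$ (1), which is essentially tautological. By condition (2) of Definition \ref{DefDivide}, $C$ admits \emph{some} distinguished triangle of the form $U\to C\to V[1]\to U[1]$ with $U\in\mathcal{U}$, $V\in\mathcal{V}$. Hypothesis (2) forces $U\in\mathcal{W}$, and setting $W_0:=U$, $V_0:=V$ produces the triangle required in (1).

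The main step is (1) $\Rightarrow$ (2). Assume we are given a distinguished triangle $W_0\to C\to V_0[1]\to W_0[1]$ with $W_0\in\mathcal{W}$ and $V_0\in\mathcal{V}$. Since $\mathcal{W}\subseteq\mathcal{U}$, this is itself a $(\mathcal{U},\mathcal{V})$-triangle over $C$. Let $U\to C\to V[1]\to U[1]$ be any other such triangle with $U\in\mathcal{U}$, $V\in\mathcal{V}$. By Proposition \ref{PropUniqueUV}(1), there is a morphism $s\in\mathcal{C}(W_0,U)$ compatible with the two structure maps to $C$, and $\underline{s}$ is an isomorphism in $\underline{\mathcal{C}}$. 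But $W_0\in\mathcal{W}$ gives $W_0\cong 0$ in $\underline{\mathcal{C}}$ by Remark \ref{RemQuotW}, hence $U\cong 0$ in $\underline{\mathcal{C}}$ as well, and invoking Remark \ref{RemQuotW} once more (closure of $\mathcal{W}$ under summands) yields $U\in\mathcal{W}$, which is (2).

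I do not anticipate a genuine obstacle: the only thing to check carefully is that the uniqueness-up-to-$\mathcal{W}$ statement of Proposition \ref{PropUniqueUV}(1) really applies to an arbitrary pair of such triangles (including the degenerate case where the $\mathcal{U}$-term happens to lie in $\mathcal{W}$), and that Remark \ref{RemQuotW}'s ``$C\cong 0$ in $\underline{\mathcal{C}}\Leftrightarrow C\in\mathcal{W}$'' is genuinely biconditional, which it is because $\mathcal{W}$ is closed under direct summands.
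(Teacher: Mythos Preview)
Your proposal is correct and follows essentially the same approach as the paper: the paper also derives $(1)\Rightarrow(2)$ from Proposition~\ref{PropUniqueUV} (giving $U\cong W_0$ in $\underline{\mathcal{C}}$) together with Remark~\ref{RemQuotW}, and dismisses the converse as trivial, which is precisely your argument via the existence of a $(\mathcal{U},\mathcal{V})$-triangle from Definition~\ref{DefDivide}.
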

\begin{proof}
Suppose {\rm (1)} holds. By Proposition \ref{PropUniqueUV}, we have $U\cong W_0$ in $\underline{\mathcal{C}}$. By Remark \ref{RemQuotW}, this means $U\in\mathcal{W}$. The converse is trivial.
\end{proof}

Dually, we have the following:
\begin{cor}\label{CorC-}
For any $C\in\mathcal{C}$, the following are equivalent.
\begin{enumerate}
\item There exists a distinguished triangle 
\[ U_0[-1]\rightarrow C\rightarrow W_0\rightarrow U_0 \]
such that $U_0\in\mathcal{U},W_0\in\mathcal{W}$.\\
\item Any distinguished triangle 
\[ U[-1]\rightarrow C\rightarrow V\rightarrow U \quad (U\in\mathcal{U},V\in\mathcal{V}) \]
satisfies $V\in\mathcal{W}$.
\end{enumerate}
\end{cor}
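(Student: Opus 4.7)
The plan is to mimic the proof of Corollary \ref{CorC+} verbatim, just swapping the roles of the two positions in the cotorsion triangles and invoking part (2) of Proposition \ref{PropUniqueUV} in place of part (1). All the machinery needed is already in place.

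First, the implication (2) $\Rightarrow$ (1) is trivial: by Remark \ref{RemDivide} (1) (the shifted version of Definition \ref{DefDivide} (2)), every $C\in\mathcal{C}$ admits at least one distinguished triangle of the form
\[ U[-1]\rightarrow C\rightarrow V\rightarrow U\quad(U\in\mathcal{U},\,V\in\mathcal{V}), \]
and assuming (2) forces $V\in\mathcal{W}$, which is an instance of (1) (taking $U_0:=U$ and $W_0:=V$).

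For the nontrivial direction (1) $\Rightarrow$ (2), suppose we are given a distinguished triangle $U_0[-1]\rightarrow C\rightarrow W_0\rightarrow U_0$ with $U_0\in\mathcal{U}$, $W_0\in\mathcal{W}$, and let
\[ U[-1]\rightarrow C\rightarrow V\rightarrow U\quad(U\in\mathcal{U},\,V\in\mathcal{V}) \]
be any other such distinguished triangle. Since $W_0\in\mathcal{W}\subseteq\mathcal{V}$, the first triangle also has its third term in $\mathcal{V}$, so both triangles fit the hypothesis of Proposition \ref{PropUniqueUV} (2). That proposition then gives an isomorphism $V\cong W_0$ in $\underline{\mathcal{C}}$. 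Since $W_0\in\mathcal{W}$, we have $W_0\cong 0$ in $\underline{\mathcal{C}}$, hence $V\cong 0$ in $\underline{\mathcal{C}}$, and Remark \ref{RemQuotW} (which uses closedness of $\mathcal{W}$ under direct summands) yields $V\in\mathcal{W}$, as required.

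There is no real obstacle: the argument is purely formal once one notes that part (2) of Proposition \ref{PropUniqueUV} is the exact dual input needed, and that Remark \ref{RemQuotW} is self-dual. The only thing worth double-checking when writing the proof is that $W_0\in\mathcal{W}$ is being used twice — once to place it in $\mathcal{V}$ so that Proposition \ref{PropUniqueUV} (2) applies to the given triangle, and once to conclude $W_0\cong 0$ in $\underline{\mathcal{C}}$.
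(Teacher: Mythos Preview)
Your proof is correct and is precisely the dual of the paper's proof of Corollary~\ref{CorC+}; indeed, the paper does not give a separate argument for Corollary~\ref{CorC-} but simply introduces it with ``Dually, we have the following''. Your explicit unpacking of that duality (using Proposition~\ref{PropUniqueUV}(2) and Remark~\ref{RemQuotW}) is exactly what is intended.
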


\begin{dfn}\label{DefC+-}
$\ \ $
\begin{enumerate}
\item $\mathcal{C}^+$ is defined to be the full subcategory of $\mathcal{C}$, consisting of objects satisfying equivalent conditions of Corollary \ref{CorC+}.
\item $\mathcal{C}^-$ is defined to be the full subcategory of $\mathcal{C}$, consisting of objects satisfying equivalent conditions of Corollary \ref{CorC-}.
\end{enumerate}
\end{dfn}

\begin{rem}\label{RemRem} The following are satisfied.
\begin{enumerate}
\item Each of $\mathcal{C}^+$ and $\mathcal{C}^-$ is an additive full subcategory of $\mathcal{C}$ containing $\mathcal{W}$.
\item $\mathcal{C}^+\supseteq\mathcal{V}[1]$.
\item $\mathcal{C}^-\supseteq\mathcal{U}[-1]$.
\end{enumerate}
\end{rem}

\begin{dfn}\label{DefHeart}
For any cotorsion pair $(\mathcal{U},\mathcal{V})$, put $\mathcal{H}:=\mathcal{C}^+\cap\mathcal{C}^-$.
Since $\mathcal{H}\supseteq\mathcal{W}$, we have an additive full subcategory
\[ \underline{\mathcal{H}}:=\mathcal{H}/\mathcal{W}\subseteq\underline{\mathcal{C}}, \]
which we call {\it the heart} of $(\mathcal{U},\mathcal{V})$.
\end{dfn}

\begin{ex}\label{ExHeart}
$\ \ $
\begin{enumerate}
\item If $(\mathcal{U},\mathcal{V})=(\mathcal{T}^{\le -1},\mathcal{T}^{\ge 1})$, where $(\mathcal{T}^{\le0},\mathcal{T}^{\ge0})$ is a $t$-structure, then we have
\begin{eqnarray*}
\mathcal{C}^-=\mathcal{T}^{\le -1}[-1]=\mathcal{T}^{\le0},\\
\mathcal{C}^+=\mathcal{T}^{\ge 1}[1]=\mathcal{T}^{\ge0},\\
\underline{\mathcal{H}}=\mathcal{H}=\mathcal{T}^{\le0}\cap\mathcal{T}^{\ge0}.
\end{eqnarray*}
Thus the definition of the heart agrees with that of a $t$-structure. Thus $\underline{\mathcal{H}}$ is abelian, and admits a cohomological functor $H^0\colon\mathcal{C}\rightarrow \underline{\mathcal{H}}$ (cf. \cite{BBD}).

\item If $\mathcal{U}=\mathcal{V}=\mathcal{T}$ is a cluster tilting subcategory of $\mathcal{C}$, then we have
\begin{eqnarray*}
\mathcal{C}^+=\mathcal{C}^-=\mathcal{H}=\mathcal{C},\\
\underline{\mathcal{H}}=\mathcal{C}/\mathcal{T}.
\end{eqnarray*}
By \cite{K-Z}, $\underline{\mathcal{H}}$ becomes an abelian category, and the quotient functor $\underline{(\ \ )}\colon\mathcal{C}\rightarrow\mathcal{C}/\mathcal{T}=\underline{\mathcal{H}}$ is cohomological.
\end{enumerate}
\end{ex}

\section{Existence of (co)reflections}

Since $\mathcal{C}^+\cap\mathcal{C}^-=\mathcal{H}\supseteq\mathcal{W}$, we have additive full subcategories of $\underline{\mathcal{C}}$
\[ \underline{\mathcal{C}}^+:=\mathcal{C}^+/\mathcal{W}\quad \text{and}\quad \underline{\mathcal{C}}^-:=\mathcal{C}^-/\mathcal{W}. \]
\[
\xy
(-16,0)*+{\underline{\mathcal{H}}}="0";
(0,8)*+{\underline{\mathcal{C}}^+}="2";
(0,-8)*+{\underline{\mathcal{C}}^-}="4";
(16,0)*+{\underline{\mathcal{C}}}="6";
{\ar@{^(->} "0";"2"};
{\ar@{^(->} "0";"4"};
{\ar@{^(->} "2";"6"};
{\ar@{^(->} "4";"6"};
{\ar@{}|\circlearrowright "0";"6"};
\endxy
\]

\begin{dfn}(Definition 3.1.1 in \cite{Borceux})
Let $\mathcal{A}$ and $\mathcal{B}$ be categories, and $F\colon A\rightarrow B$ be a functor. For any $B\in\mathcal{B}$, a reflection of $B$ along $F$ is a pair $(R_B,\eta_B)$ of $R_B\in\mathcal{A}$ and $\eta_B\in\mathcal{B}(B,F(R_B))$, satisfying the following universality:

$\mathrm{(\ast)}$ For any $A\in\mathcal{A}$ and any $b\in\mathcal{B}(B,F(A))$, there exists a unique morphism $a\in\mathcal{A}(R_B, A)$ such that $F(a)\circ\eta_B=b$.
\[
\xy
(0,-8)*+{F(A)}="0";
(-10,6)*+{B}="2";
(10,6)*+{F(R_B)}="4";
(0,8)*+{}="6";
{\ar_{b} "2";"0"};
{\ar^{\eta_B} "2";"4"};
{\ar^{F(a)} "4";"0"};
{\ar@{}|\circlearrowright "0";"6"};
\endxy
\]
A {\it coreflection} is defined dually.
\end{dfn}

\begin{const}\label{ConstReflect}
For any $C\in\mathcal{C}$, consider a diagram
\[
\xy
(-16,16)*+{U_C^{\prime}[-1]}="0";
(-15,-1)*+{U_C}="2";
(-13.5,-17.5)*+{V_C^{\prime}}="4";
(-4,2)*+{C}="6";
(2.3,-5.3)*+{Z_C}="8";
(16.5,8.5)*+{V_C[1]}="10";
(-8.5,-7.5)*+_{_{\circlearrowright}}="12";
(-11.5,5.5)*+_{_{\circlearrowright}}="14";
(4.3,0.5)*+_{_{\circlearrowright}}="14";
{\ar_{u_C^{\prime}} "0";"2"};
{\ar_{v_C^{\prime}} "2";"4"};
{\ar^{w_C} "0";"6"};
{\ar_{u_C} "2";"6"};
{\ar_{z_C} "6";"8"};
{\ar^{v_C} "6";"10"};
{\ar_{x_C} "4";"8"};
{\ar_{y_C} "8";"10"};
\endxy
\]
where
\begin{eqnarray*}
U_C\overset{u_{C}}{\longrightarrow}C\overset{v_C}{\longrightarrow}V_C[1]\rightarrow U_C[1]\\
U_C^{\prime}[-1]\overset{u_{C}^{\prime}}{\longrightarrow}U_C\overset{v_C^{\prime}}{\longrightarrow}V_C^{\prime}\rightarrow U_C^{\prime}\\
U_C^{\prime}[-1]\overset{w_{C}}{\longrightarrow}C\overset{z_C}{\longrightarrow}Z_C\rightarrow U_C^{\prime}
\end{eqnarray*}
are distinguished triangles, satisfying $U_C,U_C^{\prime}\in\mathcal{U}$ and $V_C,V_C^{\prime}\in\mathcal{V}$.
Since $\mathcal{U}$ is extension-closed, we have $V_C^{\prime}\in\mathcal{W}$.
By the octahedron axiom,
\[ V_C^{\prime}\overset{x_C}{\longrightarrow}Z_C\overset{y_C}{\longrightarrow}V_C[1]\rightarrow V_C^{\prime} \]
also becomes a distinguished triangle.
Thus $Z_C$ belongs to $\mathcal{C}^+$.
\end{const}

\begin{prop}\label{PropReflection}
In the notation of Construction \ref{ConstReflect}, for any $C\in\mathcal{C}$,
\[ \underline{z_C}\colon C\rightarrow Z_C \]
gives a reflection of $C$ along $\underline{\mathcal{C}}^+\hookrightarrow\underline{\mathcal{C}}$.
\end{prop}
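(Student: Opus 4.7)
The plan is to verify the universal property $(\ast)$ for $\underline{z_C}\colon C \to Z_C$ directly, using the defining distinguished triangle $U_C^{\prime}[-1] \xrightarrow{w_C} C \xrightarrow{z_C} Z_C \xrightarrow{z'} U_C^{\prime}$ from Construction \ref{ConstReflect} together with Proposition \ref{PropVanishLift}(1) as the bridge between equalities in $\mathcal{C}$ and in $\underline{\mathcal{C}}$. Recall that $Z_C \in \mathcal{C}^+$ has already been established in the construction, so only the universality remains.

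For existence, given $A \in \mathcal{C}^+$ and $\underline{b} \in \underline{\mathcal{C}}(C,A)$ represented by $b \in \mathcal{C}(C,A)$, I would first show $b \circ w_C = 0$ in $\mathcal{C}$: the long exact $\mathrm{Hom}$-sequence attached to the triangle above then yields $a \in \mathcal{C}(Z_C, A)$ with $a \circ z_C = b$, and $\underline{a}$ is the desired factorization. To establish $b \circ w_C = 0$, choose a triangle $W_0 \to A \to V_0[1] \to W_0[1]$ witnessing $A \in \mathcal{C}^+$, with $W_0 \in \mathcal{W}$ and $V_0 \in \mathcal{V}$. Since $\mathrm{Ext}^1(U_C, V_0) = 0$, the composite $b \circ u_C$ factors through $W_0 \in \mathcal{W}$; using $w_C = u_C \circ u_C^{\prime}$, the composite $b \circ w_C$ then factors through $\mathcal{W}$, so $\underline{b \circ w_C} = 0$ in $\underline{\mathcal{C}}$. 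By Proposition \ref{PropVanishLift}(1) applied to $U_C^{\prime} \in \mathcal{U}$, this morphism already vanishes in $\mathcal{C}$.

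For uniqueness, given two extensions $\underline{a_1}, \underline{a_2}$ of $\underline{b}$, set $d := a_1 - a_2$. Then $d \circ z_C$ factors through some $W \in \mathcal{W}$ as $C \xrightarrow{\delta} W \xrightarrow{\gamma} A$. The vanishing $\mathrm{Ext}^1(U_C^{\prime}, W) = 0$ makes $\mathcal{C}(Z_C, W) \to \mathcal{C}(C, W)$ surjective, so $\delta$ lifts to some $\delta'\colon Z_C \to W$. Then $(d - \gamma \circ \delta') \circ z_C = 0$, and exactness of the $\mathrm{Hom}$-sequence supplies $e \in \mathcal{C}(U_C^{\prime}, A)$ with $d - \gamma \circ \delta' = e \circ z'$. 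Reusing the $\mathrm{Ext}^1(U_C^{\prime}, V_0) = 0$ argument from existence, $e$ itself factors through $W_0 \in \mathcal{W}$; therefore $d$ factors through $W \oplus W_0 \in \mathcal{W}$, so $\underline{a_1} = \underline{a_2}$.

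The principal subtlety will be the interplay between equality in $\mathcal{C}$ and mere factorization through $\mathcal{W}$ (i.e.\ vanishing in $\underline{\mathcal{C}}$); the decisive tool reconciling the two is Proposition \ref{PropVanishLift}(1), which collapses both notions precisely for morphisms out of $\mathcal{U}[-1]$. It is exactly to make this applicable that Construction \ref{ConstReflect} is arranged so that the first term of the defining triangle for $Z_C$ is $U_C^{\prime}[-1]$.
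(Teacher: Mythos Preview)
Your proof is correct. The existence argument is essentially identical to the paper's: both show $b\circ w_C=0$ by factoring $b\circ u_C$ through the object $W_0\in\mathcal{W}$ in a triangle witnessing $A\in\mathcal{C}^+$, then invoke Proposition~\ref{PropVanishLift}(1).

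For uniqueness the two arguments diverge in technique. The paper composes $q-q'$ with the map $v_Y\colon Y\to V_Y[1]$ and uses Proposition~\ref{PropVanishLift}(2) to upgrade $\underline{v_Y\circ(q-q')\circ z_C}=0$ to an actual equality in $\mathcal{C}$; one then obtains a morphism of triangles, and $\mathcal{C}(U_C^{\prime},V_Y[1])=0$ forces $v_Y\circ(q-q')=0$, so $q-q'$ factors through $U_Y\in\mathcal{W}$ in one step. Your argument instead avoids Proposition~\ref{PropVanishLift} entirely at this stage: you lift the $\mathcal{W}$-factorization of $d\circ z_C$ along $z_C$ (using $\mathrm{Ext}^1(U_C^{\prime},W)=0$), peel off the lifted part, and then show the residual factor $e\colon U_C^{\prime}\to A$ itself lands in $\mathcal{W}$ via $\mathrm{Ext}^1(U_C^{\prime},V_0)=0$. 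The paper's route is a bit shorter and exploits the symmetry of Proposition~\ref{PropVanishLift}; yours is slightly more elementary in that it stays entirely within the triangulated long exact sequences and the $\mathrm{Ext}^1(\mathcal{U},\mathcal{V})=0$ axiom, at the cost of handling two $\mathcal{W}$-summands rather than one.
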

\begin{proof}
Let $Y$ be any object in $\mathcal{C}^+$, and let $y\in\mathcal{C}(C,Y)$ be any morphism.
It suffices to show that there exists a unique morphism
\[ \underline{q}\in\underline{\mathcal{C}}(Z_C,Y) \]
such that $\underline{q}\circ\underline{z_C}=\underline{y}$.

\[
\xy
(-4.5,8.5)*+{C}="0";
(0,-6)*+{Y}="2";
(11,5)*+{Z_C}="4";
(8,-4)*+{}="6";
{\ar_{\underline{y}} "0";"2"};
{\ar^{\underline{z_C}} "0";"4"};
{\ar^{\underline{q}} "4";"2"};
{\ar@{}|\circlearrowright "0";"6"};
\endxy
\]
In fact, $q$ can be chosen to satisfy $q\circ z_C=y$.

First, we show the existence.
Since
\[ U_C^{\prime}[-1]\overset{w_C}{\longrightarrow}C\overset{z_C}{\longrightarrow}Z_C\rightarrow U_C^{\prime} \]
is a distinguished triangle, it suffices to show $y\circ w_C=0$.

Let
\[ U_Y\overset{u_Y}{\longrightarrow}Y\overset{v_Y}{\longrightarrow}V_Y[1]\rightarrow U_Y[1] \]
be a distinguished triangle such that $U_Y\in\mathcal{W}, V_Y\in\mathcal{V}$.
By $\mathcal{C}(U_C,V_Y[1])=0$, there exists a morphism $y_C\in\mathcal{C}(U_C,U_Y)$ such that $y\circ u_C=u_Y\circ y_U$.
\[
\xy
(0,16)*+{U_C^{\prime}[-1]}="0";
(-10,6)*+{U_C}="2";
(-10,-6)*+{U_Y}="4";
(10,6)*+{C}="6";
(10,-6)*+{Y}="8";
(0,2)*+{}="10";
{\ar_{u_C^{\prime}} "0";"2"};
{\ar_{y_U} "2";"4"};
{\ar_{u_C} "2";"6"};
{\ar^{w_C} "0";"6"};
{\ar^{y} "6";"8"};
{\ar_{u_Y} "4";"8"};
{\ar@{}|\circlearrowright "2";"8"};
{\ar@{}|\circlearrowright "0";"10"};
\endxy
\]

Since $U_Y\in\mathcal{W}$, we have
\[ \underline{y}\circ\underline{w_C}=\underline{u_Y}\circ\underline{y_U}\circ\underline{u_C^{\prime}}=0. \]
By Proposition \ref{PropVanishLift}, we obtain $y\circ w_C=0$.

To show the uniqueness,
suppose $q,q^{\prime}\in\mathcal{C}(Z_C,Y)$ satisfies
\begin{eqnarray*}
\underline{q}\circ\underline{z_C}=\underline{y}=\underline{q^{\prime}}\circ\underline{z_C}.\\
\xy
(-7,13)*+{C}="0";
(10,11)*+{Z_C}="2";
(-16,0)*+{U_Y}="4";
(0,0)*+{Y}="6";
(0,14)*+{}="7";
(16,0)*+{V_Y[1]}="8";
{\ar^{z_C} "0";"2"};
{\ar_{y} "0";"6"};
{\ar_{u_Y} "4";"6"};
{\ar_{v_Y} "6";"8"};
{\ar|*{_{q,q^{\prime}}} "2";"6"};
{\ar@{}|\circlearrowright "6";"7"};
\endxy
\end{eqnarray*}
Since $\underline{v_Y}\circ(\underline{q}-\underline{q^{\prime}})\circ\underline{z_C}=0$ in $\underline{\mathcal{C}}(C,V_Y[1])$, it follows $v_Y\circ(q-q^{\prime})\circ z_C=0$ by Proposition \ref{PropVanishLift}.

Thus we have a morphism of triangles
\[
\xy
(-16,8)*+{C}="0";
(0,8)*+{Z_C}="2";
(16,8)*+{U_C^{\prime}}="4";
(-16,-8)*+{U_Y}="10";
(0,-8)*+{Y}="12";
(16,-8)*+{V_Y[1]\, .}="14";
{\ar^{z_C} "0";"2"};
{\ar "2";"4"};
{\ar "0";"10"};
{\ar "4";"14"};
{\ar|*+{_{q-q^{\prime}}} "2";"12"};
{\ar_{u_Y} "10";"12"};
{\ar_{v_Y} "12";"14"};
{\ar@{}|\circlearrowright "0";"12"};
{\ar@{}|\circlearrowright "2";"14"};
\endxy
\]
Since $\mathcal{C}(U_C^{\prime},V_Y[1])=0$, this implies $v_Y\circ(q-q^{\prime})=0$. Thus $q-q^{\prime}$ factors through $U_Y\in\mathcal{W}$, which means $\underline{q}=\underline{q^{\prime}}$.
\end{proof}

\begin{cor}\label{CorAdjoint}$($Proposition 3.1.2 and Proposition 3.1.3 in \cite{Borceux}$)$
$\ \ $
\begin{enumerate}
\item Since $(Z_C,z_C)$ is a reflection of $C$ along $\underline{\mathcal{C}}^+\hookrightarrow\underline{\mathcal{C}}$, it is determined up to a canonical isomorphism in $\underline{\mathcal{C}}^+$.
\item As in \cite{Borceux}, if we allow the axiom of choice, we obtain a left adjoint $\sigma\colon\underline{\mathcal{C}}\rightarrow\underline{\mathcal{C}}^+$ of the inclusion $\underline{\mathcal{C}}^+\overset{i^+}{\hookrightarrow}\underline{\mathcal{C}}$. If we denote the adjunction by $\eta\colon\mathrm{Id}_{\underline{\mathcal{C}}}\Longrightarrow i^+\circ\sigma$, then there exists a canonical isomorphism $Z_C\cong\sigma(C)$ in $\underline{\mathcal{C}}$, compatible with $\underline{z_C}$ and $\eta_C$.
\[
\xy
(-8,8)*+{C}="0";
(8,8)*+{Z_C}="2";
(0,-5)*+{\sigma(C)}="4";
(0,12)*+{}="6";
{\ar^{\underline{z_C}} "0";"2"};
{\ar_{\eta_C} "0";"4"};
{\ar^{\cong} "2";"4"};
{\ar@{}|\circlearrowright "6";"4"};
\endxy
\]
\end{enumerate}
\end{cor}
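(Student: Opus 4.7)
The plan is to observe that both parts of the corollary are purely formal consequences of the universal property established in Proposition \ref{PropReflection}, and are in fact standard results in general category theory (as the parenthetical citation to Borceux indicates). No further properties of the triangulated structure or of the cotorsion pair will be needed; only the definition of reflection.

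For part (1), I would argue as follows. Suppose $(Z_C, \underline{z_C})$ and $(Z'_C, \underline{z'_C})$ are two reflections of $C$ along $i^+\colon \underline{\mathcal{C}}^+ \hookrightarrow \underline{\mathcal{C}}$. Applying the universal property of $(Z_C, \underline{z_C})$ to the morphism $\underline{z'_C}\colon C\to i^+(Z'_C)$ yields a unique $\underline{q}\in\underline{\mathcal{C}}^+(Z_C, Z'_C)$ with $i^+(\underline{q})\circ\underline{z_C} = \underline{z'_C}$, and symmetrically a unique $\underline{q'}\in\underline{\mathcal{C}}^+(Z'_C, Z_C)$ with $i^+(\underline{q'})\circ\underline{z'_C} = \underline{z_C}$. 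Both $\underline{q'}\circ\underline{q}$ and $\mathrm{id}_{Z_C}$ then satisfy the defining universal condition for the reflection $(Z_C,\underline{z_C})$ with respect to $\underline{z_C}$ itself, so by the uniqueness clause they agree; symmetrically for the other composite. Hence $\underline{q}$ is an isomorphism, and it is canonical because it is uniquely determined by the requirement of compatibility with $\underline{z_C}$ and $\underline{z'_C}$.

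For part (2), I would invoke the axiom of choice to select, for each $C\in\underline{\mathcal{C}}$, a reflection $(\sigma(C),\eta_C)$ with $\sigma(C)\in\underline{\mathcal{C}}^+$. Given a morphism $\underline{f}\colon C\to D$ in $\underline{\mathcal{C}}$, the composite $\eta_D\circ\underline{f}\colon C\to i^+\sigma(D)$ factors uniquely as $i^+(\sigma(\underline{f}))\circ\eta_C$ for some morphism $\sigma(\underline{f})\in\underline{\mathcal{C}}^+(\sigma(C),\sigma(D))$, which defines $\sigma$ on morphisms. Functoriality ($\sigma(\mathrm{id})=\mathrm{id}$, $\sigma(\underline{g}\underline{f}) = \sigma(\underline{g})\sigma(\underline{f})$) is then forced by the uniqueness clause of the universal property, applied to the two candidate factorizations. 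The natural bijection
\[
\underline{\mathcal{C}}^+(\sigma(C), A) \;\xrightarrow{\;\cong\;}\; \underline{\mathcal{C}}(C, i^+(A)),\qquad \underline{a}\longmapsto i^+(\underline{a})\circ\eta_C,
\]
for $A\in\underline{\mathcal{C}}^+$, is precisely the content of the universal property, and its naturality in both variables gives the adjunction $\sigma\dashv i^+$ with unit $\eta$. Finally, the canonical isomorphism $Z_C\cong\sigma(C)$ compatible with $\underline{z_C}$ and $\eta_C$ is immediate from part (1), as both pairs are reflections of $C$.

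The only potential subtlety in this plan is verifying functoriality of $\sigma$ rigorously, but since it is a direct diagram chase off the uniqueness half of the universal property, there is no genuine obstacle; the argument is entirely formal.
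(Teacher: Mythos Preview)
Your proposal is correct and matches the paper's approach: the paper gives no proof of its own for this corollary but simply cites Propositions 3.1.2 and 3.1.3 of Borceux, which contain exactly the standard categorical argument you have written out. You have essentially reproduced the content of those cited propositions, so there is nothing to add or correct.
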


Dually, we have the following:
\begin{rem}\label{RemCoReflect}
For any $C\in\mathcal{C}$, if we take a diagram
\[
\xy
(-20,16)*+{U_C^{\prime\prime}[-1]}="0";
(0.5,15)*+{K_C}="2";
(17,14)*+{U_C^{\prime\prime\prime}}="4";
(-2,4)*+{C}="6";
(6.1,-0.6)*+{V_C^{\prime\prime}}="8";
(-5.8,-14.2)*+{V_C^{\prime\prime\prime}[1]}="10";
(7.5,8.5)*+_{_{\circlearrowright}}="12";
(-5.5,11.5)*+_{_{\circlearrowright}}="14";
(-0.5,-4.3)*+_{_{\circlearrowright}}="14";
{\ar^{} "0";"2"};
{\ar^{} "2";"4"};
{\ar_{} "0";"6"};
{\ar^{k_C} "2";"6"};
{\ar^{} "6";"8"};
{\ar_{} "6";"10"};
{\ar^{} "4";"8"};
{\ar^{} "8";"10"};
\endxy
\quad (U_C^{\prime\prime},U_C^{\prime\prime\prime}\in\mathcal{U},V_C^{\prime\prime},V_C^{\prime\prime\prime}\in\mathcal{V})
\]
where all $180^\circ$ composition of arrows are distinguished triangles, then $\underline{k_C}\colon K_C\rightarrow C$ gives a coreflection of $C$ along $\underline{\mathcal{C}}^-\hookrightarrow\underline{\mathcal{C}}$.
Thus $K_C$ is uniquely determined up to a canonical isomorphism in $\underline{\mathcal{C}}^-$. Moreover if we allow the axiom of choice, we obtain a right adjoint of $\underline{\mathcal{C}}^-\hookrightarrow\underline{\mathcal{C}}$.
\end{rem}

\begin{lem}\label{LemHeart}
In the notation of Construction \ref{ConstReflect}, if $C$ belongs to $\mathcal{C}^-$, then $Z_C$ belongs to $\mathcal{H}$.

Dually, $C\in\mathcal{C}^+$ implies $K_C\in\mathcal{H}$ $($in the notation of Remark \ref{RemCoReflect}$)$.
\end{lem}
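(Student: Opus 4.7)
The plan is to prove $Z_C \in \mathcal{C}^-$; since $Z_C \in \mathcal{C}^+$ has already been established in Construction \ref{ConstReflect}, this yields $Z_C \in \mathcal{H}$. To detect $Z_C \in \mathcal{C}^-$ via Corollary \ref{CorC-}, I will first pick any distinguished triangle $U[-1] \to Z_C \overset{g}{\to} V \to U$ with $U \in \mathcal{U}$ and $V \in \mathcal{V}$, as provided by Remark \ref{RemDivide} (1), and then show $V \in \mathcal{W}$.

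The key step is to apply the octahedron axiom to the composite $C \overset{z_C}{\to} Z_C \overset{g}{\to} V$. The cones of $z_C$, of $g$, and of their composite are $U_C'$, $U$, and some object $Q$ respectively, so the octahedron produces a distinguished triangle
\[ U_C' \to Q \to U \to U_C'[1]. \]
Since $U_C', U \in \mathcal{U}$ and $\mathcal{U}$ is closed under extensions by Remark \ref{RemDivide} (2), the middle term $Q$ automatically lies in $\mathcal{U}$. The octahedron also provides a distinguished triangle $C \to V \to Q \to C[1]$, which, rotated backward, becomes $Q[-1] \to C \to V \to Q$; this is a triangle of the form treated in Corollary \ref{CorC-}, so the hypothesis $C \in \mathcal{C}^-$ forces $V \in \mathcal{W}$. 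Condition (1) of Corollary \ref{CorC-} applied to the original triangle for $Z_C$ then gives $Z_C \in \mathcal{C}^-$.

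The dual statement about $K_C$ when $C \in \mathcal{C}^+$ will follow by running the same argument with all arrows reversed, using Corollary \ref{CorC+} in place of Corollary \ref{CorC-}. I do not anticipate a serious obstacle beyond identifying the right octahedron to apply; once one decides to form the composition $C \to Z_C \to V$, the property $C \in \mathcal{C}^-$ propagates mechanically to $Z_C$ through the extension-closedness of $\mathcal{U}$.
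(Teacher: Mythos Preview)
Your argument is correct, and it is genuinely different from the paper's route. Both proofs hinge on an octahedron built from the map $z_C\colon C\to Z_C$, but you compose \emph{forward} with $g\colon Z_C\to V$, while the paper composes \emph{backward} with $u\colon U[-1]\to C$ (coming from a triangle $U[-1]\to C\to V\to U$ with $V\in\mathcal{W}$). Your octahedron yields the triangle $Q[-1]\to C\to V\to Q$ with $Q\in\mathcal{U}$ and $V\in\mathcal{V}$, which is exactly the shape handled by Corollary~\ref{CorC-}, so $C\in\mathcal{C}^-$ gives $V\in\mathcal{W}$ in one stroke. The paper's octahedron instead produces a triangle $U[-1]\to Z_C\to Q\to U$ with $Q\in\mathcal{U}$; since $Q$ is not known to lie in $\mathcal{V}$, this does not fit Corollary~\ref{CorC-} directly, and the paper must compare it with a second triangle $U_Z[-1]\to Z_C\to V_Z\to U_Z$ via a morphism of triangles, then invoke Lemma~\ref{LemDisApp} and Proposition~\ref{PropVanishLift} to force $V_Z\in\mathcal{W}$. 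Your approach is shorter and uses only extension-closure of $\mathcal{U}$ plus the defining property of $\mathcal{C}^-$, avoiding the comparison-of-triangles step entirely.
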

\begin{proof}
We only show the former half. The latter is shown dually.
Let
\begin{eqnarray*}
U[-1]\overset{u}{\longrightarrow}C\overset{v}{\longrightarrow}V\rightarrow U\\
U_Z[-1]\overset{u_Z}{\longrightarrow}Z_C\overset{v_Z}{\longrightarrow}V_Z\rightarrow U_Z
\end{eqnarray*}
be distinguished triangles satisfying $U,U_Z\in\mathcal{U}$ and $V,V_Z\in\mathcal{V}$.
By assumption, $V\in\mathcal{W}$. It suffices to show $V_Z\in\mathcal{W}$.

In the notation of Construction \ref{ConstReflect}, complete $z_C\circ u\colon U[-1]\rightarrow Z_C$ into a distinguished triangle
\[ U[-1]\overset{z_C\circ u}{\longrightarrow}Z_C\rightarrow Q\rightarrow U. \]
By the octahedron axiom, we also have a distinguished triangle
\[ V\rightarrow Q\rightarrow U_C^{\prime}\rightarrow V[1], \]
and thus $Q\in\mathcal{U}$.
\[
\xy
(-20,16)*+{U[-1]}="0";
(0.5,15)*+{C}="2";
(17,14)*+{V}="4";
(-2,4)*+{Z_C}="6";
(6.3,-0.8)*+{Q}="8";
(-5.8,-14.2)*+{U_C^{\prime}}="10";
(7.5,8.5)*+_{_{\circlearrowright}}="12";
(-5.5,11.5)*+_{_{\circlearrowright}}="14";
(-0.5,-4.3)*+_{_{\circlearrowright}}="14";
{\ar^{u} "0";"2"};
{\ar^{} "2";"4"};
{\ar_{z_C\circ u} "0";"6"};
{\ar^{z_C} "2";"6"};
{\ar^{} "6";"8"};
{\ar_{} "6";"10"};
{\ar^{} "4";"8"};
{\ar^{} "8";"10"};
\endxy
\]
Since $\mathcal{C}(U[-1],V_Z)=0$, we obtain a morphism of triangles
\[
\xy
(-24,8)*+{U[-1]}="0";
(-8,8)*+{Z_C}="2";
(8,8)*+{Q}="4";
(24,8)*+{U}="6";
(-24,-8)*+{U_Z[-1]}="10";
(-8,-8)*+{Z_C}="12";
(8,-8)*+{V_Z}="14";
(24,-8)*+{U_Z.}="16";
{\ar "0";"10"};
{\ar@{=} "2";"12"};
{\ar "4";"14"};
{\ar "6";"16"};
{\ar "0";"2"};
{\ar "10";"12"};
{\ar "2";"4"};
{\ar_{v_Z} "12";"14"};
{\ar "4";"6"};
{\ar "14";"16"};
{\ar@{}|\circlearrowright "0";"12"};
{\ar@{}|\circlearrowright "2";"14"};
{\ar@{}|\circlearrowright "4";"16"};
\endxy
\]
Since $\underline{\mathcal{C}}(Q,V_Z)=0$ by Lemma \ref{LemDisApp}, it follows $\underline{v_Z}=0$.
For any $V^{\dag}\in\mathcal{V}$ and any $v^{\dag}\in\mathcal{C}(V_Z,V^{\dag}[1])$ we have $\underline{v^{\dag}\circ v_Z}=\underline{v^{\dag}}\circ 0=0$ in $\underline{\mathcal{C}}(Z_C,V^{\dag}[1])$.
By Proposition \ref{PropVanishLift}, we obtain $v^{\dag}\circ v_Z=0$.
Thus $v^{\dag}$ factors through $U_Z$, which means $v^{\dag}=0$ since $\mathcal{C}(U_Z, V^{\dag}[1])=0$.
\[
\xy
(-8,0)*+{Z_C}="0";
(8,0)*+{V_Z}="2";
(24,0)*+{U_Z}="4";
(8,-14)*+{V^{\dag}[1]}="6";
(18,-10)*+{}="10";
{\ar^{v_Z} "0";"2"};
{\ar "2";"4"};
{\ar_{v^{\dag}} "2";"6"};
{\ar^{0} "4";"6"};
{\ar@{}|\circlearrowright "2";"10"};
\endxy
\]
Thus we have $\mathcal{C}(V_Z,\mathcal{V}[1])=0$, namely $V_Z\in\mathcal{W}$.
\end{proof}

\section{Existence of (co-)kernels}

\begin{lem}\label{LemConstofM}
Let $A\overset{f}{\longrightarrow}B$ be any morphism in $\mathcal{C}$.
Take a diagram
\[
\xy
(0,12)*+{U_A[-1]}="0";
(0,0)*+{A}="2";
(0,-12)*+{V_A}="4";
(16,0)*+{B}="6";
(32,-12)*+{M_f}="8";
(10,8)*+{}="10";
{\ar_{u_A} "0";"2"};
{\ar_{v_A} "2";"4"};
{\ar^{} "0";"6"};
{\ar_{f} "2";"6"};
{\ar^{m_f} "6";"8"};
{\ar@{}|\circlearrowright "2";"10"};
\endxy
\]
where
\begin{eqnarray*}
U_A[-1]\overset{u_A}{\longrightarrow}A\overset{v_A}{\longrightarrow}V_A\rightarrow U_A\\
U_A[-1]\rightarrow B\overset{m_f}{\longrightarrow}M_f\rightarrow U_A
\end{eqnarray*}
are distinguished triangles, satisfying $U_A\in\mathcal{U}_A,V_A\in\mathcal{V}$. Then we have the following.
\begin{enumerate}
\item $A\in\mathcal{C}^-\ \Longrightarrow\ \underline{m_f}\circ\underline{f}=0$,
\item $B\in\mathcal{C}^-\ \Longrightarrow\ M_f\in\mathcal{C}^-$.
\end{enumerate}
\end{lem}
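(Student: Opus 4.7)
For part (1), I would apply the octahedron axiom to the composition $U_A[-1]\xrightarrow{u_A}A\xrightarrow{f}B$. The three relevant distinguished triangles are the given triangle on $u_A$ with cone $V_A$, the given triangle on $f\circ u_A$ with cone $M_f$, and the cone triangle $A\xrightarrow{f}B\to C_f\to A[1]$ of $f$. The octahedron then produces a fourth distinguished triangle $V_A\to M_f\to C_f\to V_A[1]$, together with the crucial factorization $m_f\circ f=(V_A\to M_f)\circ v_A$. When $A\in\mathcal{C}^-$, Corollary \ref{CorC-} forces $V_A\in\mathcal{W}$, so $m_f\circ f$ factors through an object of $\mathcal{W}$ and hence $\underline{m_f}\circ\underline{f}=0$.

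For part (2), the plan is to mimic the structural argument of Lemma \ref{LemHeart}. Since $B\in\mathcal{C}^-$, I would pick a distinguished triangle $U_B[-1]\xrightarrow{u_B}B\to W_B\to U_B$ with $U_B\in\mathcal{U}$ and $W_B\in\mathcal{W}$, and apply the octahedron axiom to the composition $U_B[-1]\xrightarrow{u_B}B\xrightarrow{m_f}M_f$. The cones of $u_B$ and $m_f$ are $W_B$ and $U_A$ respectively, and the fourth triangle has the shape $W_B\to Q\to U_A\to W_B[1]$, where $Q$ is the cone of $m_f\circ u_B$ and comes with a distinguished triangle $U_B[-1]\to M_f\to Q\to U_B$. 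Since $W_B,U_A\in\mathcal{U}$ and $\mathcal{U}$ is closed under extensions (Remark \ref{RemDivide}), we deduce $Q\in\mathcal{U}$.

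To conclude $M_f\in\mathcal{C}^-$, by Corollary \ref{CorC-} it suffices to show that every distinguished triangle $U_M[-1]\to M_f\xrightarrow{v_M}V_M\to U_M$ with $U_M\in\mathcal{U}$ and $V_M\in\mathcal{V}$ satisfies $V_M\in\mathcal{W}$. Since $\mathcal{C}(U_B[-1],V_M)=\mathrm{Ext}^1(U_B,V_M)=0$, the composition $U_B[-1]\to M_f\to V_M$ vanishes, so the identity on $M_f$ extends to a morphism of triangles producing $\phi\colon Q\to V_M$ with $v_M=\phi\circ(M_f\to Q)$. Lemma \ref{LemDisApp} gives $\underline{\mathcal{C}}(Q,V_M)=0$, so $\underline{\phi}=0$ and therefore $\underline{v_M}=0$. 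For any $V^\dagger\in\mathcal{V}$ and any $v^\dagger\in\mathcal{C}(V_M,V^\dagger[1])$ we then have $\underline{v^\dagger\circ v_M}=0$ in $\underline{\mathcal{C}}(M_f,V^\dagger[1])$, so Proposition \ref{PropVanishLift} yields $v^\dagger\circ v_M=0$. Then $v^\dagger$ factors through $U_M$, and $\mathcal{C}(U_M,V^\dagger[1])=0$ forces $v^\dagger=0$. Hence $V_M\in\mathcal{U}$, and so $V_M\in\mathcal{W}$.

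The main obstacle is that the octahedron in part (2) only yields $Q\in\mathcal{U}$, not $Q\in\mathcal{W}$, so $M_f\in\mathcal{C}^-$ cannot be read off directly from the constructed triangle. The resolution is precisely the lift-and-diagram-chase technique of Lemma \ref{LemHeart}: one transfers information from the constructed triangle into an arbitrary cotorsion triangle for $M_f$ via the identity on $M_f$, then extracts the vanishing of $v^\dagger$ by combining Lemma \ref{LemDisApp} and Proposition \ref{PropVanishLift}.
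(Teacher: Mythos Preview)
Your argument for part (1) is correct and coincides with the paper's: the paper simply says ``this immediately follows from $V_A\in\mathcal{W}$,'' leaving the factorization $m_f\circ f=(V_A\to M_f)\circ v_A$ implicit in the octahedral diagram of the statement, while you spell it out.

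For part (2) your proof is correct but takes a different route. The paper does \emph{not} invoke the octahedron or construct your auxiliary object $Q$. Instead it compares the two cotorsion triangles $U_B[-1]\to B\xrightarrow{v_B}V_B\to U_B$ and $U_M[-1]\to M_f\xrightarrow{v_M}V_M\to U_M$ directly via $m_f$ (using $\mathcal{C}(U_B[-1],V_M)=0$ to produce $m_V\colon V_B\to V_M$ with $m_V\circ v_B=v_M\circ m_f$), and then for an arbitrary $v^{\dagger}\colon V_M\to V^{\dagger}[1]$ argues in two steps: first $v^{\dagger}\circ m_V=0$ since $V_B\in\mathcal{W}\subseteq\mathcal{U}$, whence $v^{\dagger}\circ v_M\circ m_f=0$; then $v^{\dagger}\circ v_M$ factors through $U_A$ (via the triangle $B\xrightarrow{m_f}M_f\to U_A$) and so vanishes; finally $v^{\dagger}$ factors through $U_M$ and vanishes. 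Your approach instead packages $W_B$ and $U_A$ into a single object $Q\in\mathcal{U}$ by the octahedron, then runs the endgame of Lemma~\ref{LemHeart} verbatim (using $\underline{\mathcal{C}}(Q,V_M)=0$ and Proposition~\ref{PropVanishLift}). Your version is conceptually tidier in that it reduces everything to ``$Q\in\mathcal{U}$,'' at the cost of one extra octahedron; the paper's version is more elementary, trading the octahedron for a slightly longer two-step factor-and-vanish chase through $U_A$ and then $U_M$.
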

\begin{proof}
{\rm (1)} This immediately follows from $V_A\in\mathcal{W}$.

{\rm (2)} Take distinguished triangles
\begin{eqnarray*}
U_B[-1]\rightarrow B\rightarrow V_B\rightarrow U_B\\
U_M[-1]\rightarrow M_f\rightarrow V_M\rightarrow U_M\\
(U_B,U_M\in\mathcal{U},\ \ V_B,V_M\in\mathcal{V}).
\end{eqnarray*}
By assumption, $V_B\in\mathcal{W}$. It suffices to show $V_M\in\mathcal{W}$.
Since $\mathcal{C}(U_B[-1],V_M)=0$, there exists a morphism of triangles
\[
\xy
(-28,8)*+{U_B[-1]}="0";
(-8,8)*+{B}="2";
(8,8)*+{V_B}="4";
(24,8)*+{U_B}="6";
(-28,-8)*+{U_M[-1]}="10";
(-8,-8)*+{M_f}="12";
(8,-8)*+{V_M}="14";
(24,-8)*+{U_M}="16";
{\ar_{m_U} "0";"10"};
{\ar_{m_f} "2";"12"};
{\ar_{m_V} "4";"14"};
{\ar "6";"16"};
{\ar^{u_B} "0";"2"};
{\ar_{u_M} "10";"12"};
{\ar^{v_B} "2";"4"};
{\ar_{v_M} "12";"14"};
{\ar "4";"6"};
{\ar "14";"16"};
{\ar@{}|\circlearrowright "0";"12"};
{\ar@{}|\circlearrowright "2";"14"};
{\ar@{}|\circlearrowright "4";"16"};
\endxy
\]
Let $V^{\dag}$ be any object in $\mathcal{V}$, and $v^{\dag}\in\mathcal{C}(V_M,V^{\dag}[1])$ be any morphism. It suffices to show $v^{\dag}=0$. Since $v^{\dag}\circ m_V\in\mathcal{C}(V_B,V^{\dag}[1])=0$, we have $v^{\dag}\circ v_M\circ m_f=0$.

\[
\xy
(-8,14)*+{B}="0";
(-8,0)*+{M_f}="2";
(-8,-14)*+{U_A}="4";
(8,14)*+{V_B}="10";
(15,14)*+{\in\mathcal{W}}="11";
(8,0)*+{V_M}="12";
(8,-14)*+{V^{\dag}[1]}="14";
(24,0)*+{U_M}="16";
(18,-10)*+{}="18";
{\ar^{v_B} "0";"10"};
{\ar_{v_M} "2";"12"};
{\ar@{-->}_{0} "4";"14"};
{\ar_{m_f} "0";"2"};
{\ar "2";"4"};
{\ar^{m_V} "10";"12"};
{\ar_{v^{\dag}} "12";"14"};
{\ar "12";"16"};
{\ar@{-->}^{0} "16";"14"};
{\ar@{}|\circlearrowright "0";"12"};
{\ar@{}|\circlearrowright "2";"14"};
{\ar@{}|\circlearrowright "18";"12"};
\endxy
\]
Thus $v^{\dag}\circ v_M$ factors through $U_A$, which means $v^{\dag}\circ v_M=0$, since $\mathcal{C}(U_A,V^{\dag}[1])=0$.
Thus $v^{\dag}$ factors through $U_M$, and $v^{\dag}=0$ follows from $\mathcal{C}(U_M,V^{\dag}[1])=0$.
\end{proof}

Dually, we have the following:
\begin{rem}\label{RemConstofL}
For any morphism $A\overset{f}{\longrightarrow}B$ in $\mathcal{C}$, consider a diagram
\[
\xy
(0,-12)*+{V_B[1]}="0";
(0,0)*+{B}="2";
(0,12)*+{U_B}="4";
(-16,0)*+{A}="6";
(-32,12)*+{L_f}="8";
(-10,-8)*+{}="10";
{\ar^{v_B} "2";"0"};
{\ar_{u_B} "4";"2"};
{\ar_{v_B\circ f} "6";"0"};
{\ar^{f} "6";"2"};
{\ar^{\ell_f} "8";"6"};
{\ar@{}|\circlearrowright "2";"10"};
\endxy
\]
where
\begin{eqnarray*}
U_B\overset{u_B}{\longrightarrow} B\overset{v_B}{\longrightarrow} V_B[1]\rightarrow U_B[1]\\
V_B\rightarrow L_f\overset{\ell_f}{\longrightarrow} A\rightarrow V_B[1]
\end{eqnarray*}
are distinguished triangles satisfying $U_B\in\mathcal{U}, V_B\in\mathcal{V}$.
Then, $A\in\mathcal{C}^+$ implies $L_f\in\mathcal{C}^+$, and $B\in\mathcal{C}^+$ implies $\underline{f}\circ\underline{\ell_f}$=0.
\end{rem}

\begin{prop}\label{PropUnivofM}
Let $A\overset{f}{\longrightarrow}B$ be any morphism in $\mathcal{C}$.
Then $m_f\colon B\rightarrow M_f$ in Lemma \ref{LemConstofM} satisfies the following property$:$

$(\ast)$\ \ For any $C\in\mathcal{C}$ and any morphism $g\in\mathcal{C}(B,C)$ satisfying $\underline{g}\circ\underline{f}=0$, there exists a morphism $c\in\mathcal{C}(M_f,C)$ such that $c\circ m_f=g$.
\[
\xy
(-8,0)*+{A}="0";
(10,0)*+{B}="2";
(30,0)*+{C}="4";
(20,-12)*+{M_f}="6";
(20,2)*+{}="10";
{\ar^{f} "0";"2"};
{\ar^{g} "2";"4"};
{\ar_{m_f} "2";"6"};
{\ar_{c} "6";"4"};
{\ar@{}|\circlearrowright "6";"10"};
\endxy
\]
Moreover if $C\in\mathcal{C}^+$, then $\underline{c}\in\underline{\mathcal{C}}(M_f,C)$ satisfying
\[ \underline{c}\circ\underline{m_f}=\underline{g} \]
is unique in $\underline{\mathcal{C}}(M_f,C)$.
The dual statement also holds for $L_f$ in Remark \ref{RemConstofL}.
\end{prop}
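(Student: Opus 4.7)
The plan is to treat existence and uniqueness separately, with the dual statement for $L_f$ obtained by reversing arrows throughout.

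For existence, the hypothesis $\underline{g}\circ\underline{f}=0$ means $g\circ f=\beta\circ\alpha$ for some $\alpha\in\mathcal{C}(A,W)$ and $\beta\in\mathcal{C}(W,C)$ with $W\in\mathcal{W}$. Precomposing with $u_A$ yields $\alpha\circ u_A\in\mathcal{C}(U_A[-1],W)=\mathrm{Ext}^1(U_A,W)$, which vanishes since $W\in\mathcal{V}$ and $\mathrm{Ext}^1(\mathcal{U},\mathcal{V})=0$. Hence $g\circ f\circ u_A=0$ in $\mathcal{C}$. Since Construction \ref{ConstReflect}-style reasoning provides the distinguished triangle
\[ U_A[-1]\overset{f\circ u_A}{\longrightarrow}B\overset{m_f}{\longrightarrow}M_f\rightarrow U_A, \]
the standard factorization axiom of triangulated categories produces $c\in\mathcal{C}(M_f,C)$ with $c\circ m_f=g$.

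For uniqueness when $C\in\mathcal{C}^+$, I would take $c_1,c_2\in\mathcal{C}(M_f,C)$ both satisfying $\underline{c_i}\circ\underline{m_f}=\underline{g}$ and set $d:=c_1-c_2$; the goal is $\underline{d}=0$. Using $C\in\mathcal{C}^+$, choose a distinguished triangle $W_C\rightarrow C\overset{\pi_C}{\longrightarrow}V_C[1]\rightarrow W_C[1]$ with $W_C\in\mathcal{W}$ and $V_C\in\mathcal{V}$. From $\underline{d\circ m_f}=0$ I get $\underline{\pi_C\circ d\circ m_f}=0$ in $\underline{\mathcal{C}}(B,V_C[1])$, which by Proposition \ref{PropVanishLift} (2) equals $\mathcal{C}(B,V_C[1])$; hence $\pi_C\circ d\circ m_f=0$ on the nose. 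The triangle defining $m_f$ then lets $\pi_C\circ d$ factor through $U_A$, and since $\mathcal{C}(U_A,V_C[1])=\mathrm{Ext}^1(\mathcal{U},\mathcal{V})=0$, one concludes $\pi_C\circ d=0$. Consequently $d$ factors through $W_C\in\mathcal{W}$, so $\underline{d}=0$.

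The main obstacle is the uniqueness step: a priori, $d\circ m_f$ factors through some arbitrary $\mathcal{W}$-object, not the particular $W_C$ from the $\mathcal{C}^+$-decomposition of $C$, so one cannot directly conclude that $d$ itself factors through $W_C$. The remedy is the two-step passage via $\pi_C$, which promotes vanishing in $\underline{\mathcal{C}}$ to genuine vanishing in $\mathcal{C}$ via Proposition \ref{PropVanishLift}; the triangle defining $m_f$ together with $\mathrm{Ext}^1(\mathcal{U},\mathcal{V})=0$ then finishes the argument. The dual statement for $L_f$ in Remark \ref{RemConstofL} follows by reversing all arrows, using the diagram $V_B\rightarrow L_f\overset{\ell_f}{\longrightarrow}A\overset{v_B\circ f}{\longrightarrow}V_B[1]$, and invoking part (1) of Proposition \ref{PropVanishLift} in place of part (2).
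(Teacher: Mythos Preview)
Your argument is correct and follows essentially the same route as the paper: for existence you show $g\circ f\circ u_A=0$ (you unpack the $\mathcal{W}$-factorization by hand, whereas the paper simply cites Proposition~\ref{PropVanishLift}(1)) and then factor through the triangle defining $m_f$; for uniqueness you post-compose with the map $C\to V_C[1]$ from a $\mathcal{C}^+$-decomposition, use Proposition~\ref{PropVanishLift}(2) to lift vanishing in $\underline{\mathcal{C}}$ to $\mathcal{C}$, factor through $U_A$, and conclude via $\mathrm{Ext}^1(\mathcal{U},\mathcal{V})=0$ --- exactly the paper's proof with different letters. One cosmetic point: the triangle $U_A[-1]\to B\to M_f\to U_A$ is part of the data in Lemma~\ref{LemConstofM} itself, not something obtained from Construction~\ref{ConstReflect}.
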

\begin{proof}
First we show the existence.
By Proposition \ref{PropVanishLift}, $\underline{g}\circ\underline{f}\circ\underline{u_A}=0$ means $g\circ f\circ u_A=0$.
Thus there exists $c\in\mathcal{C}(M_f,C)$ such that $c\circ m_f=g$.
\[
\xy
(-16,14)*+{U_A[-1]}="0";
(-16,0)*+{A}="2";
(0,0)*+{B}="4";
(16,6)*+{C}="6";
(12,-11)*+{M_f}="8";
(20,-4)*+{}="10";
(-7,9)*+{}="12";
{\ar_{u_A} "0";"2"};
{\ar_{f} "2";"4"};
{\ar^{f\circ u_A} "0";"4"};
{\ar^{g} "4";"6"};
{\ar_{m_f} "4";"8"};
{\ar@{-->}_{c} "8";"6"};
{\ar@{}|\circlearrowright "4";"10"};
{\ar@{}|\circlearrowright "2";"12"};
\endxy
\]
To show the uniqueness, let
\[ U_C\overset{u_C}{\longrightarrow}C\overset{v_C}{\longrightarrow}V_C[1]\rightarrow U_C[1] \]
be a distinguished triangle with $U_C\in\mathcal{W},V_C\in\mathcal{V}$.
Suppose $c,c^{\prime}\in\mathcal{C}(M_f,C)$ satisfy
\[ \underline{c}\circ\underline{m_f}=\underline{g}=\underline{c^{\prime}}\circ\underline{m_f}. \]
By Proposition \ref{PropVanishLift}, $\underline{v_C}\circ(\underline{c}-\underline{c^{\prime}})\circ\underline{m_f}=0$ means $v_C\circ(c-c^{\prime})\circ m_f=0$.
Thus $v_C\circ(c-c^{\prime})$ factors through $U_A$, and thus $v_C\circ(c-c^{\prime})=0$ follows from $\mathcal{C}(U_A,V_C[1])=0$.
Thus $c-c^{\prime}$ factors through $U_C\in\mathcal{W}$, which means $\underline{c}=\underline{c^{\prime}}$.
\[
\xy
(-16,7)*+{B}="0";
(0,7)*+{M_f}="2";
(16,7)*+{U_A}="4";
(-16,-7)*+{U_C}="10";
(0,-7)*+{C}="12";
(16,-7)*+{V_C[1]}="14";
{\ar^{m_f} "0";"2"};
{\ar "2";"4"};
{\ar "0";"10"};
{\ar^{0} "4";"14"};
{\ar|*+{_{c-c^{\prime}}} "2";"12"};
{\ar_{u_C} "10";"12"};
{\ar_{v_C} "12";"14"};
{\ar@{}|\circlearrowright "0";"12"};
{\ar@{}|\circlearrowright "2";"14"};
\endxy
\]
\end{proof}

\begin{cor}\label{CorCoKerExist}
In $\underline{\mathcal{H}}$, any morphism has a cokernel and a kernel.
\end{cor}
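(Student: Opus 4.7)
The plan is to construct the cokernel of a morphism $\underline{f}\colon A\to B$ in $\underline{\mathcal{H}}$ by chaining the two universal constructions already produced in the paper: the object $M_f$ from Lemma \ref{LemConstofM}, which plays the role of a ``pre-cokernel'' in $\mathcal{C}^-$, and the reflection $Z_{(-)}$ from Construction \ref{ConstReflect}, which moves us into $\mathcal{C}^+$. Kernels are handled dually using $L_f$ of Remark \ref{RemConstofL} and the coreflection $K_{(-)}$ of Remark \ref{RemCoReflect}.

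More precisely, I would start with $A,B\in\mathcal{H}$ and form $M_f$ as in Lemma \ref{LemConstofM}. Since $B\in\mathcal{C}^-$, part (2) of that lemma gives $M_f\in\mathcal{C}^-$, so by Lemma \ref{LemHeart} the reflection $Z_{M_f}$ lies in $\mathcal{H}$. I claim that
\[
\underline{z_{M_f}}\circ\underline{m_f}\colon B\longrightarrow Z_{M_f}
\]
is a cokernel of $\underline{f}$ in $\underline{\mathcal{H}}$. The composition with $\underline{f}$ vanishes because $A\in\mathcal{C}^-$ gives $\underline{m_f}\circ\underline{f}=0$ by Lemma \ref{LemConstofM}(1).

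For the universal property, let $\underline{g}\colon B\to X$ be any morphism in $\underline{\mathcal{H}}$ with $X\in\mathcal{H}$ and $\underline{g}\circ\underline{f}=0$. Lift $\underline{g}$ to $g\in\mathcal{C}(B,X)$ and apply Proposition \ref{PropUnivofM} to obtain $c\in\mathcal{C}(M_f,X)$ with $c\circ m_f=g$. Because $X\in\mathcal{H}\subseteq\mathcal{C}^+$, Proposition \ref{PropReflection} (the reflection property of $z_{M_f}$) produces a unique $\underline{q}\in\underline{\mathcal{C}}(Z_{M_f},X)$ with $\underline{q}\circ\underline{z_{M_f}}=\underline{c}$, and hence $\underline{q}\circ(\underline{z_{M_f}}\circ\underline{m_f})=\underline{g}$. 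For uniqueness of $\underline{q}$: if $\underline{q'}$ also satisfies $\underline{q'}\circ\underline{z_{M_f}}\circ\underline{m_f}=\underline{g}$, then the uniqueness clause of Proposition \ref{PropUnivofM} (which applies because $X\in\mathcal{C}^+$) forces $\underline{q'}\circ\underline{z_{M_f}}=\underline{c}=\underline{q}\circ\underline{z_{M_f}}$, after which the uniqueness in the reflection Proposition \ref{PropReflection} gives $\underline{q'}=\underline{q}$.

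The kernel of $\underline{f}$ is constructed by the dual procedure: form $L_f$ via Remark \ref{RemConstofL} (so $L_f\in\mathcal{C}^+$ since $A\in\mathcal{C}^+$, and $\underline{f}\circ\underline{\ell_f}=0$ since $B\in\mathcal{C}^+$), then pass to the coreflection $K_{L_f}\in\mathcal{H}$ from Remark \ref{RemCoReflect} and Lemma \ref{LemHeart}; the morphism $\underline{\ell_f}\circ\underline{k_{L_f}}\colon K_{L_f}\to A$ is checked to be a kernel by exactly the dual chain of universal properties. There is no real obstacle here: the work has all been done in Sections~3 and~4, and the only thing to be careful about is that both uniqueness statements needed in the argument (in Propositions \ref{PropUnivofM} and \ref{PropReflection}) are conditional on the target lying in $\mathcal{C}^+$, which is automatic once we test against objects of $\mathcal{H}$.
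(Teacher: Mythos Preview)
Your proof is correct and follows essentially the same route as the paper: construct $M_f$ via Lemma~\ref{LemConstofM}, reflect into $\mathcal{C}^+$ to land in $\mathcal{H}$ by Lemma~\ref{LemHeart}, and verify the universal property by chaining Proposition~\ref{PropUnivofM} with Proposition~\ref{PropReflection}. Your treatment of uniqueness is slightly more explicit than the paper's (which just says the result follows immediately from those two propositions), but the argument is the same.
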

\begin{proof}
We only show the construction of the cokernel.
For any $A,B\in\mathcal{H}$ and any $f\in\mathcal{C}(A,B)$, define $m_f\colon B\rightarrow M_f$ as in Lemma \ref{LemConstofM}.
Since $A,B\in\mathcal{C}^-$, it follows
\[ \underline{m_f}\circ\underline{f}=0,\quad M_f\in\mathcal{C}^- \]
by Lemma \ref{LemConstofM}.
By Proposition \ref{PropReflection}, there exists $z_M\colon M_f\rightarrow Z_M$ which gives a reflection $\underline{z_M}\colon M_f\rightarrow Z_M$ of $M_f$ along $\underline{\mathcal{C}}^+\hookrightarrow\underline{\mathcal{C}}$.
By Lemma \ref{LemHeart}, $Z_M$ satisfies $Z_M\in\mathcal{H}$.

We claim that $\underline{z_M}\circ\underline{m_f}\colon B\rightarrow Z_M$ is the cokernel of $\underline{f}$. Let $S$ be any object in $\mathcal{H}$, and let $s\in\mathcal{C}(B,S)$ be any morphism satisfying $\underline{s}\circ\underline{f}=0$.
\[
\xy
(-18,0)*+{A}="2";
(0,0)*+{B}="6";
(-2,10)*+{}="7";
(14,10)*+{S}="8";
(9,-7)*+{M_f}="4";
(18,-14)*+{Z_M}="10";
{\ar "6";"4"};
{\ar_{\underline{f}} "2";"6"};
{\ar_{\underline{m_f}} "6";"4"};
{\ar_{\underline{z_M}} "4";"10"};
{\ar_{\underline{s}} "6";"8"};
{\ar@{}|{\circlearrowright} "6";"7"};
{\ar@/^0.80pc/^{0} "2";"8"};
\endxy
\]
It suffices to show that there uniquely exists $\underline{t}\in\underline{\mathcal{H}}(Z_M,S)$ such that
\[ \underline{t}\circ\underline{z_M}\circ\underline{m_f}=\underline{s}. \]
This follows immediately from Proposition \ref{PropReflection} and Proposition \ref{PropUnivofM}.
\end{proof}

\section{Abelianess of the heart}

In this section, as the main theorem, we show that the heart $\underline{\mathcal{H}}$ becomes an abelian category, for any cotorsion pair $(\mathcal{U},\mathcal{V})$. Although propositions and lemmas in this section could be applied for objects in $\mathcal{C}^+$ or $\mathcal{C}^-$ (with certain modifications of the statement), we mainly consider objects in $\mathcal{H}$.

\begin{prop}\label{PropEpim}
Let $B,C\in\mathcal{H}$, and let
\[ A\overset{f}{\longrightarrow}B\overset{g}{\longrightarrow}C\overset{h}{\longrightarrow}A[1] \]
be a distinguished triangle in $\mathcal{C}$.
Let $m_g\colon C\rightarrow M_g$ be as in Lemma \ref{LemConstofM}.
Then the following are equivalent.
\begin{enumerate}
\item $\underline{g}$ is epimorphic in $\underline{\mathcal{H}}$.
\item $\underline{g}$ is epimorphic in $\underline{\mathcal{C}}^+$.
\item $M_g$ satisfies $\underline{\mathcal{C}}(M_g, \underline{\mathcal{C}}^+)=0$.
\item $M_g$ satisfies $\mathcal{C}(M_g, \mathcal{V}[1])=0$.
\end{enumerate}
The dual statement also holds for monomorphisms.
\end{prop}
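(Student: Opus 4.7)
The plan is to split the fourfold equivalence into three pairwise equivalences $(3)\!\Leftrightarrow\!(4)$, $(2)\!\Leftrightarrow\!(3)$, and $(1)\!\Leftrightarrow\!(2)$. Each invokes a distinct piece of the preceding machinery: the reduction of $\underline{\mathcal{C}}$-morphisms to $\mathcal{C}$-morphisms from Proposition \ref{PropVanishLift} handles $(3)\!\Leftrightarrow\!(4)$, the universal factorization through $m_g$ from Proposition \ref{PropUnivofM} handles $(2)\!\Leftrightarrow\!(3)$, and the coreflection along $\underline{\mathcal{C}}^-\hookrightarrow\underline{\mathcal{C}}$ from Remark \ref{RemCoReflect} (combined with Lemma \ref{LemHeart}) handles $(1)\!\Leftrightarrow\!(2)$.

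For $(3)\!\Leftrightarrow\!(4)$: since $\mathcal{V}[1]\subseteq\mathcal{C}^+$ by Remark \ref{RemRem}(2), and $\underline{\mathcal{C}}(M_g,V[1])=\mathcal{C}(M_g,V[1])$ by Proposition \ref{PropVanishLift}(2), condition (3) restricts to (4). Conversely, (4) says precisely $M_g\in\mathcal{U}$, again by Proposition \ref{PropVanishLift}(2). For any $Y\in\mathcal{C}^+$, pick a defining triangle $W_0\to Y\to V_0[1]\to W_0[1]$ with $W_0\in\mathcal{W}$, $V_0\in\mathcal{V}$; applying $\mathcal{C}(M_g,-)$ and using $\mathcal{C}(M_g,V_0[1])=0$, every morphism $M_g\to Y$ factors through $W_0\in\mathcal{W}$, so $\underline{\mathcal{C}}(M_g,Y)=0$.

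For $(2)\!\Leftrightarrow\!(3)$: since $B,C\in\mathcal{H}\subseteq\mathcal{C}^-$, Lemma \ref{LemConstofM} gives both $\underline{m_g}\,\underline{g}=0$ and $M_g\in\mathcal{C}^-$. Assuming (2), for any $Y\in\mathcal{C}^+$ and any $\underline{\phi}:M_g\to Y$, the composite $\underline{\phi}\,\underline{m_g}:C\to Y$ satisfies $(\underline{\phi}\,\underline{m_g})\,\underline{g}=0=0\cdot\underline{g}$, so $\underline{\phi}\,\underline{m_g}=0$ by (2), and the uniqueness clause of Proposition \ref{PropUnivofM} (valid because $Y\in\mathcal{C}^+$) forces $\underline{\phi}=0$. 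Conversely, assuming (3), if $\underline{y_1}\,\underline{g}=\underline{y_2}\,\underline{g}$ for $y_1,y_2:C\to Y$ with $Y\in\mathcal{C}^+$, the existence clause of Proposition \ref{PropUnivofM} factors $y_1-y_2=c\,m_g$ for some $c:M_g\to Y$; by (3), $\underline{c}=0$, whence $\underline{y_1}=\underline{y_2}$.

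For $(1)\!\Leftrightarrow\!(2)$: $(2)\!\Rightarrow\!(1)$ is automatic since $\mathcal{H}\subseteq\mathcal{C}^+$. For the converse, given $Y\in\mathcal{C}^+$ and $y_1,y_2:C\to Y$ with $\underline{y_1}\,\underline{g}=\underline{y_2}\,\underline{g}$, take the coreflection $\underline{k_Y}:K_Y\to Y$ along $\underline{\mathcal{C}}^-\hookrightarrow\underline{\mathcal{C}}$ from Remark \ref{RemCoReflect}; Lemma \ref{LemHeart} ensures $K_Y\in\mathcal{H}$. Since $B,C\in\mathcal{C}^-$, the universal property lifts $y_1,y_2$ uniquely to $\tilde y_1,\tilde y_2:C\to K_Y$ in $\underline{\mathcal{C}}$, and uniqueness of lifts at $B$ promotes the equality $\underline{y_1}\,\underline{g}=\underline{y_2}\,\underline{g}$ to $\tilde y_1\,\underline{g}=\tilde y_2\,\underline{g}$; hypothesis (1) then yields $\tilde y_1=\tilde y_2$, so $\underline{y_1}=\underline{y_2}$. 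The dual assertion for monomorphisms is obtained by running the entire argument in $\mathcal{C}^{\mathrm{op}}$, swapping the roles of $\mathcal{U}$ and $\mathcal{V}$, $\mathcal{C}^+$ and $\mathcal{C}^-$, and $M_g$ and $L_g$. The main subtlety is precisely the upward passage in the last step: descent from $\underline{\mathcal{C}}^+$ to $\underline{\mathcal{H}}$ is trivial, but the reverse direction uses in an essential way that the coreflection of an object of $\mathcal{C}^+$ lands inside $\mathcal{H}$, which is the nontrivial content of Lemma \ref{LemHeart}.
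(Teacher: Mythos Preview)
Your proof is correct and follows essentially the same approach as the paper: the paper likewise establishes $(1)\Leftrightarrow(2)$ via the coreflection $K_S$ and Lemma~\ref{LemHeart}, obtains $(2)\Leftrightarrow(3)$ from the bijection $-\circ\underline{m_g}\colon\underline{\mathcal{C}}(M_g,S)\cong\{\underline{s}\mid\underline{s}\circ\underline{g}=0\}$ given by Proposition~\ref{PropUnivofM}, and proves $(3)\Leftrightarrow(4)$ using Proposition~\ref{PropVanishLift} and the defining triangle of an object in $\mathcal{C}^+$. The only cosmetic differences are that the paper argues with a single morphism $\underline{s}$ (showing $\underline{s}=0\Leftrightarrow\underline{s}\circ\underline{g}=0$) where you subtract two, and it does not explicitly note that condition~(4) means $M_g\in\mathcal{U}$.
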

\begin{proof}
First, we show the equivalence of {\rm (1)} and {\rm (2)}.
Obviously, {\rm (2)} implies {\rm (1)}.
To show the converse, let $S$ be any object in $\mathcal{C}^+$, and $\underline{s}\in\underline{\mathcal{C}}(C,S)$ be any morphism.
Let $k_S\colon K_S\rightarrow S$ be the morphism defined in Remark \ref{RemCoReflect}, which gives a coreflection of $S$ along $\underline{\mathcal{C}}^-\hookrightarrow\underline{\mathcal{C}}$. By Lemma \ref{LemHeart}, $K_S\in\mathcal{H}$.

By Remark \ref{RemCoReflect}, there exists $\underline{j}\in\underline{\mathcal{C}}^-(C,K_S)$ such that $\underline{k_S}\circ\underline{j}=\underline{s}$,
\[
\xy
(-8,0)*+{B}="0";
(10,0)*+{C}="2";
(30,0)*+{S}="4";
(20,12)*+{K_S}="6";
(20,-2)*+{}="10";
{\ar^{\underline{g}} "0";"2"};
{\ar_{\underline{s}} "2";"4"};
{\ar^{\underline{j}} "2";"6"};
{\ar^{\underline{k_S}} "6";"4"};
{\ar@{}|\circlearrowright "6";"10"};
\endxy
\]
and we have
\begin{eqnarray*}
\underline{j}=0 &\Longleftrightarrow& \underline{k_S}\circ\underline{j}=0\\
\underline{j}\circ\underline{g}=0 &\Longleftrightarrow& \underline{k_S}\circ(\underline{j}\circ\underline{g})=0.
\end{eqnarray*}
By {\rm (1)}, we have
\[ \underline{j}=0\ \ \Longleftrightarrow\ \ \underline{j}\circ\underline{g}=0. \]
Thus $\underline{s}=0$ if and only if $\underline{s}\circ\underline{g}=0$, i.e., $\underline{g}$ is epimorphic in $\underline{\mathcal{C}}^+$.

Second, we show that {\rm (2)} implies {\rm (3)}.
By Proposition \ref{PropUnivofM}, for any $S\in\underline{\mathcal{C}}^+$ we have an isomorphism
\[ -\circ\underline{m_g}\colon\underline{\mathcal{C}}(M_g,S)\overset{\cong}{\longrightarrow}\{\underline{s}\in\underline{\mathcal{C}}(C,S)\mid\underline{s}\circ\underline{g}=0\}. \]

Third, we show that {\rm (3)} implies {\rm (4)}.
This immediately follows from Proposition \ref{PropVanishLift}, since $\mathcal{V}[1]\subseteq\mathcal{C}^+$.

Finally, we show that {\rm (4)} implies {\rm (3)}.
Suppose $\mathcal{C}(M_g,\mathcal{V}[1])=0.$ For any $S\in\mathcal{C}^+$, let
\[ U_S\overset{u_S}{\longrightarrow}S\overset{v_S}{\longrightarrow}V_S[1]\rightarrow U_S[1] \]
be a distinguished triangle satisfying $U_S\in\mathcal{W},V_S\in\mathcal{V}$.
By $\mathcal{C}(M_g,\mathcal{V}[1])=0$, any morphism $s\in\mathcal{C}(M_g,S)$ factors through $U_S\in\mathcal{W}$, which means $\underline{s}=0$.
\end{proof}

\begin{lem}\label{LemSection}
Let $B,C\in\mathcal{H}$, and let
\[ A\overset{f}{\longrightarrow}B\overset{g}{\longrightarrow}C\overset{h}{\longrightarrow}A[1] \]
be a distinguished triangle in $\mathcal{C}$.
Take a diagram
\[
\xy
(-10,14)*+{U_A[-1]}="0";
(-10,0)*+{A}="2";
(-10,-14)*+{V_A}="4";
(10,14)*+{U_B[-1]}="10";
(10,0)*+{B}="12";
(10,-14)*+{V_B}="14";
(28,0)*+{C}="16";
{\ar^{f_U} "0";"10"};
{\ar^{f} "2";"12"};
{\ar_{f_V} "4";"14"};
{\ar_{u_A} "0";"2"};
{\ar_{v_A} "2";"4"};
{\ar^{u_B} "10";"12"};
{\ar^{v_B} "12";"14"};
{\ar_{g} "12";"16"};
{\ar@{}|\circlearrowright "0";"12"};
{\ar@{}|\circlearrowright "2";"14"};
\endxy
\]
where
\begin{eqnarray*}
U_A[-1]\overset{u_A}{\longrightarrow} A\overset{v_A}{\longrightarrow} V_A\rightarrow U_A\\
U_B[-1]\overset{u_B}{\longrightarrow} B\overset{v_B}{\longrightarrow} V_B\rightarrow U_B
\end{eqnarray*}
are distinguished triangles satisfying $U_A,U_B\in\mathcal{U}, V_A\in\mathcal{V}$ and $V_B\in\mathcal{W}$.

If $\underline{g}$ is an epimorphism in $\underline{H}$, then there exists a morphism $s\in\mathcal{C}(V_B,V_A)$ such that
\[ s\circ v_B\circ f=v_A. \]
\end{lem}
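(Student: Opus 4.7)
The plan is to encode the desired equation $s \circ v_B \circ f = v_A$ as the vanishing of an obstruction in a $\mathrm{Hom}$-long exact sequence, and then to use Proposition \ref{PropEpim} to kill that obstruction. Concretely, I first complete $v_B \circ f \colon A \to V_B$ to a distinguished triangle
\[
A \overset{v_B \circ f}{\longrightarrow} V_B \longrightarrow M \longrightarrow A[1],
\]
and apply $\mathcal{C}(-, V_A)$ to obtain the exact sequence
\[
\mathcal{C}(V_B, V_A) \overset{(v_B \circ f)^{\ast}}{\longrightarrow} \mathcal{C}(A, V_A) \longrightarrow \mathcal{C}(M, V_A[1]).
\]
Producing $s$ with $s \circ v_B \circ f = v_A$ is exactly the assertion that $v_A$ lies in the image of $(v_B \circ f)^{\ast}$, so the entire proof reduces to verifying $\mathcal{C}(M, V_A[1]) = 0$.

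Next I identify $M$ with $M_g$ of Lemma \ref{LemConstofM} via the octahedron axiom applied to $A \overset{f}{\longrightarrow} B \overset{v_B}{\longrightarrow} V_B$. Feeding in the triangle $A \overset{f}{\to} B \overset{g}{\to} C \overset{h}{\to} A[1]$ and the rotation $B \overset{v_B}{\to} V_B \to U_B \overset{u_B[1]}{\to} B[1]$ of the defining triangle of $(U_B, V_B)$, the octahedron produces a fourth distinguished triangle $C \to M \to U_B \to C[1]$ whose connecting morphism $U_B \to C[1]$ is $g[1] \circ u_B[1] = (g \circ u_B)[1]$. Rotating yields
\[
U_B[-1] \overset{g \circ u_B}{\longrightarrow} C \longrightarrow M \longrightarrow U_B,
\]
which is precisely the defining triangle of $M_g$ in Lemma \ref{LemConstofM} applied to $g \colon B \to C$. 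Hence $M \cong M_g$.

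Finally, since $\underline{g}$ is epimorphic in $\underline{\mathcal{H}}$, Proposition \ref{PropEpim} gives $\mathcal{C}(M_g, \mathcal{V}[1]) = 0$; as $V_A \in \mathcal{V}$, we conclude $\mathcal{C}(M, V_A[1]) \cong \mathcal{C}(M_g, V_A[1]) = 0$, and the required $s \in \mathcal{C}(V_B, V_A)$ exists.

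The main obstacle I foresee is the diagram chase inside the octahedron: one must verify with some care that the connecting morphism in the induced fourth triangle really is $(g \circ u_B)[1]$, so that after rotation one recognizes the defining triangle of $M_g$ on the nose rather than merely an abstract cone. Once this identification is in place the Hom-vanishing is immediate from Proposition \ref{PropEpim}, and the long exact sequence produces the desired section $s$.
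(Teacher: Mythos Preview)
Your argument is correct and rests on exactly the same key input as the paper's proof: Proposition~\ref{PropEpim} yields $\mathcal{C}(M_g,V_A[1])=0$, and this vanishing is what forces $v_A$ to factor through $v_B\circ f$. The only difference is organizational. The paper performs the factorization in two elementary steps---first lifting $v_A$ to some $q\colon B\to V_A$ using the triangle $C[-1]\to A\to B$ (since $h$ factors through $m_g$ and $\mathcal{C}(M_g[-1],V_A)=0$), and then lifting $q$ to $s\colon V_B\to V_A$ using $\mathcal{C}(U_B[-1],V_A)=0$---whereas you fuse both lifts into one by invoking the octahedron to identify the cone of $v_B\circ f$ with $M_g$ directly and then reading off $s$ from a single long exact sequence. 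Your route is a bit slicker; the paper's avoids the octahedron and the attendant bookkeeping with signs in the connecting maps.
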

\begin{proof}
Let $C\overset{m_g}{\longrightarrow}M_g$ be as in Lemma \ref{LemConstofM}.
Remark $h$ factors $m_g$.
\[
\xy
(10,0)*+{C}="2";
(30,0)*+{A[1]}="4";
(20,-12)*+{M_g}="6";
(20,2)*+{}="10";
{\ar^{h} "2";"4"};
{\ar_{m_g} "2";"6"};
{\ar^{{}^{\exists}} "6";"4"};
{\ar@{}|\circlearrowright "6";"10"};
\endxy
\]
By Proposition \ref{PropEpim}, $\mathcal{C}(M_g[-1],V_A)\cong\mathcal{C}(M_g,V_A[1])=0$, and thus $v_A\circ h[-1]=0$.
\[
\xy
(-9,8)*+{C[-1]}="0";
(-9,-8)*+{M_g[-1]}="2";
(9,8)*+{A}="4";
(9,-8)*+{V_A}="6";
(2,-3)*+{}="8";
(-2,3)*+{}="10";
{\ar^{h[-1]} "0";"4"};
{\ar_{m_g[-1]} "0";"2"};
{\ar^{v_A} "4";"6"};
{\ar_{0} "2";"6"};
{\ar "2";"4"};
{\ar@{}|\circlearrowright "0";"8"};
{\ar@{}|\circlearrowright "6";"10"};
\endxy
\]
Thus there exists $q\in\mathcal{C}(B,V_A)$ such that $q\circ f=v_A$.
\[
\xy
(-30,0)*+{C[-1]}="0";
(-10,0)*+{A}="2";
(-10,-14)*+{V_A}="4";
(8,12)*+{U_B[-1]}="10";
(8,0)*+{B}="12";
(8,-14)*+{V_B}="14";
(2,-9)*+{}="15";
(-24,-9)*+{}="17";
{\ar^{h[-1]} "0";"2"};
{\ar^{f} "2";"12"};
{\ar_{0} "0";"4"};
{\ar|*+{_{v_A}} "2";"4"};
{\ar_{f_V} "4";"14"};
{\ar^{q} "12";"4"};
{\ar^{u_B} "10";"12"};
{\ar^{v_B} "12";"14"};
{\ar@{}|\circlearrowright "2";"15"};
{\ar@{}|\circlearrowright "2";"17"};
\endxy
\]

Since $\mathcal{C}(U_B[-1],V_A)=0$, we have $q\circ u_B=0$, and thus there exists $s\in\mathcal{C}(V_B,V_A)$ such that $s\circ v_B=q$.
Thus we obtain $s\circ v_B\circ f=q\circ f=v_A$.
\end{proof}

\begin{lem}\label{LemAinC-}
In the notation of Lemma \ref{LemSection}, if $\underline{g}$ is epimorphic in $\underline{\mathcal{H}}$, then $A$ belongs to $\mathcal{C}^-$.
\end{lem}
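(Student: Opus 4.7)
The aim is to show $V_A \in \mathcal{W}$; by Corollary \ref{CorC-} this will immediately give $A \in \mathcal{C}^-$. Since $V_A \in \mathcal{V}$ holds by construction, the task reduces to verifying $V_A \in \mathcal{U}$, i.e.\ $\mathcal{C}(V_A, V[1]) = \mathrm{Ext}^1(V_A, V) = 0$ for every $V \in \mathcal{V}$.

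The key input will be the section produced in Lemma \ref{LemSection}: a morphism $s \in \mathcal{C}(V_B, V_A)$ with $s \circ v_B \circ f = v_A$. Combining this with the commutativity $v_B \circ f = f_V \circ v_A$ of the diagram in Lemma \ref{LemSection}, I obtain $(s \circ f_V) \circ v_A = v_A$, equivalently
\[
(s \circ f_V - \mathrm{id}_{V_A}) \circ v_A = 0.
\]
Rotating the distinguished triangle $U_A[-1] \xrightarrow{u_A} A \xrightarrow{v_A} V_A \xrightarrow{w_A} U_A$ gives a triangle $A \xrightarrow{v_A} V_A \xrightarrow{w_A} U_A \to A[1]$, so the exact sequence $\mathcal{C}(U_A, V_A) \xrightarrow{-\circ w_A} \mathcal{C}(V_A, V_A) \xrightarrow{-\circ v_A} \mathcal{C}(A, V_A)$ forces $s \circ f_V - \mathrm{id}_{V_A}$ to factor through $w_A$. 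Pick $\psi \in \mathcal{C}(U_A, V_A)$ with $s \circ f_V - \mathrm{id}_{V_A} = \psi \circ w_A$.

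Now fix $V \in \mathcal{V}$ and any $\alpha \in \mathcal{C}(V_A, V[1])$. Post-composing the identity by $\alpha$ yields
\[
\alpha = \alpha \circ s \circ f_V - \alpha \circ \psi \circ w_A.
\]
Since $B \in \mathcal{C}^-$, Corollary \ref{CorC-} gives $V_B \in \mathcal{W} \subseteq \mathcal{U}$, whence $\mathcal{C}(V_B, V[1]) = \mathrm{Ext}^1(V_B, V) = 0$; this kills $\alpha \circ s$ and hence the first summand. Since $U_A \in \mathcal{U}$, we also have $\mathcal{C}(U_A, V[1]) = \mathrm{Ext}^1(U_A, V) = 0$, which kills $\alpha \circ \psi$ and hence the second. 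Therefore $\alpha = 0$, proving $V_A \in \mathcal{U}$ as required.

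I do not anticipate a serious obstacle: the real work has been packaged into Lemma \ref{LemSection}, and what remains is essentially the bookkeeping observation that the section $s$ factors $v_A$ through the $\mathcal{W}$-object $V_B$, so that $v_A$ becomes irrelevant for detecting $\mathrm{Ext}^1$ into $\mathcal{V}$. The only place I would be careful is the rotation step that identifies $s \circ f_V - \mathrm{id}_{V_A}$ as lying in the image of $-\circ w_A$, which I would verify by writing out the relevant piece of the long exact sequence explicitly.
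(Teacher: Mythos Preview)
Your proof is correct and follows essentially the same approach as the paper: both use the section $s$ from Lemma~\ref{LemSection} together with $V_B\in\mathcal{W}$ and $U_A\in\mathcal{U}$ to kill every morphism $V_A\to V[1]$. The paper is marginally more direct---it observes $v^{\dag}\circ v_A = v^{\dag}\circ s\circ v_B\circ f = 0$ immediately (since $v^{\dag}\circ s=0$) and then factors $v^{\dag}$ through $U_A$, bypassing your decomposition $\mathrm{id}_{V_A} = s\circ f_V - \psi\circ w_A$---but the content is the same.
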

\begin{proof}
We use the notation of Lemma \ref{LemSection}.
By the same lemma, there exists $s\in\mathcal{C}(V_B,V_A)$ such that $s\circ v_B\circ f=v_A$.

It suffices to show $V_A\in\mathcal{W}$.
By assumption $V_B\in\mathcal{W}$ and thus for any $V^{\dag}\in\mathcal{V}$ and any $v^{\dag}\in\mathcal{C}(V_A,V^{\dag}[1])$, we have $v^{\dag}\circ s=0$.
Thus $v^{\dag}\circ v_A=v^{\dag}\circ s\circ v_B\circ f=0$, and $v^{\dag}$ factors through $U_A$, which implies $v^{\dag}=0$.
\[
\xy
(-10,12)*+{A}="0";
(10,12)*+{B}="1";
(-10,0)*+{V_A}="2";
(-10,-16)*+{U_A}="4";
(10,0)*+{V_B}="6";
(5.5,-12.5)*+{V^{\dag}[1]}="8";
(-5,-10)*+{}="10";
(-1,-17)*+{}="12";
{\ar_{v_A} "0";"2"};
{\ar^{f} "0";"1"};
{\ar^{v_B} "1";"6"};
{\ar "2";"4"};
{\ar_{s} "6";"2"};
{\ar|{v^{\dag}} "2";"8"};
{\ar^{0} "6";"8"};
{\ar@{-->}_{0} "4";"8"};
{\ar@{}|\circlearrowright "6";"10"};
{\ar@{}|\circlearrowright "0";"6"};
{\ar@{}|\circlearrowright "2";"12"};
\endxy
\]
\end{proof}

\begin{thm}\label{ThmHeartAbel}
For any cotorsion pair $(\mathcal{U},\mathcal{V})$ in $\mathcal{C}$, its heart
\[ \underline{\mathcal{H}}=(\mathcal{C}^+\cap\mathcal{C}^-)/(\mathcal{U}\cap\mathcal{V}) \]
is an abelian category.
\end{thm}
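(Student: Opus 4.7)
Since $\underline{\mathcal{H}}=\mathcal{H}/\mathcal{W}$ is additive as a subquotient of the additive category $\mathcal{C}$, and Corollary \ref{CorCoKerExist} already supplies kernels and cokernels, what remains is to verify that every monomorphism in $\underline{\mathcal{H}}$ is the kernel of its cokernel and, dually, that every epimorphism is the cokernel of its kernel. The cotorsion pair setup is self-dual under swapping $\mathcal{U}\leftrightarrow\mathcal{V}$ (and $\mathcal{C}^+\leftrightarrow\mathcal{C}^-$), so it suffices to treat the epimorphism case.

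Let $\underline{g}\colon B\to C$ be epi in $\underline{\mathcal{H}}$. Lift to $g\in\mathcal{C}(B,C)$ and complete to a distinguished triangle $A\xrightarrow{f}B\xrightarrow{g}C\to A[1]$. By Lemma \ref{LemAinC-}, $A\in\mathcal{C}^-$. Applying Construction \ref{ConstReflect} to $A$ produces a $\mathcal{C}^+$-reflection $z_A\colon A\to Z_A$ with $Z_A\in\mathcal{H}$ by Lemma \ref{LemHeart}, and then Proposition \ref{PropReflection}, applied at $B\in\mathcal{C}^+$, yields a unique $\bar{f}\in\underline{\mathcal{H}}(Z_A,B)$ with $\bar{f}\circ\underline{z_A}=\underline{f}$. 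My plan is to prove both that $\bar{f}$ is the kernel of $\underline{g}$ and that $\underline{g}$ is the cokernel of $\bar{f}$ in $\underline{\mathcal{H}}$; together these yield $\underline{g}=\operatorname{coker}(\ker\underline{g})$, finishing the epi case.

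For $\underline{g}$ being the cokernel of $\bar{f}$: the vanishing $\underline{g}\circ\bar{f}=0$ follows from the triangle identity $g\circ f=0$ and reflection universality. For the universal property, given $\underline{s}\colon B\to S$ in $\underline{\mathcal{H}}$ with $\underline{s}\circ\bar{f}=0$, precomposing with $\underline{z_A}$ gives $\underline{s}\circ\underline{f}=0$, whence Proposition \ref{PropUnivofM} applied to $f\colon A\to B$ provides $c\in\mathcal{C}(M_f,S)$ with $c\circ m_f=s$. The octahedron axiom on the composite $U_A[-1]\to A\xrightarrow{f}B$ produces a distinguished triangle $V_A\to M_f\xrightarrow{\alpha}C\to V_A[1]$ with $\alpha\circ m_f=g$ and $V_A\in\mathcal{W}$ (since $A\in\mathcal{C}^-$). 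After modifying $c$ within its $\mathcal{W}$-class so that $c\circ(V_A\to M_f)=0$ in $\mathcal{C}$, the morphism factors as $c=t\circ\alpha$, producing $\underline{t}\in\underline{\mathcal{H}}(C,S)$ with $\underline{t}\circ\underline{g}=\underline{s}$; uniqueness is automatic since $\underline{g}$ is epi. That $\bar{f}$ is then the kernel of $\underline{g}$ is verified by a dual argument using Remark \ref{RemConstofL} and the dual halves of Propositions \ref{PropEpim} and \ref{PropUnivofM}.

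The crux is the modification step: promoting the vanishing of $\underline{c}\circ\underline{(V_A\to M_f)}$ in $\underline{\mathcal{C}}$ to a genuine equality in $\mathcal{C}$. Since $V_A\in\mathcal{W}\subseteq\mathcal{U}$ and $S\in\mathcal{C}^+$ sits in a triangle $W_S\to S\to V_S[1]\to W_S[1]$ with $W_S\in\mathcal{W}$ and $V_S\in\mathcal{V}$, the axiom $\mathcal{C}(\mathcal{U},\mathcal{V}[1])=0$ forces $c\circ(V_A\to M_f)$ to factor as $V_A\to W_S\to S$. Extending the lift $V_A\to W_S$ along $V_A\to M_f$ requires the vanishing of an obstruction in $\mathcal{C}(C[-1],W_S)$, which I expect to obtain by applying $\mathcal{C}(-,W_S)$ to the $\mathcal{C}^-$-triangle $U_C[-1]\to C\to V_C\to U_C$ of $C$ and invoking $\operatorname{Ext}^1(\mathcal{U},\mathcal{V})=0$ twice (once for $U_C$, once for $V_C\in\mathcal{W}\subseteq\mathcal{U}$). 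Subtracting the extension from $c$ produces the required representative $c'\equiv c\pmod{\mathcal{W}}$ with $c'\circ(V_A\to M_f)=0$. All other checks reduce to Ext-vanishing arguments already standard in this framework.
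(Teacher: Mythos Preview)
Your overall architecture matches the paper's: reduce to the epi case, complete $g$ to a triangle $A\to B\to C\to A[1]$, invoke Lemma~\ref{LemAinC-} to get $A\in\mathcal{C}^-$, reflect to $Z_A\in\mathcal{H}$, and show $\underline{g}=\mathrm{cok}(\bar f)$. (The extra claim $\bar f=\ker\underline{g}$ is not needed for abelianness and your ``dual argument'' is not available: the dual of Proposition~\ref{PropEpim} concerns monomorphisms, which you have not assumed of $\underline{g}$.)

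The genuine gap is precisely at your ``crux.'' Your extension obstruction lives in $\mathcal{C}(C[-1],W_S)$, and the long exact sequence from the $\mathcal{C}^-$-triangle of $C$ does not kill it: shifting to $U_C[-2]\to C[-1]\to V_C[-1]\to U_C[-1]$ and applying $\mathcal{C}(-,W_S)$, the two $\mathrm{Ext}^1$-vanishings you cite give $\mathcal{C}(U_C[-1],W_S)=\mathcal{C}(V_C[-1],W_S)=0$, but $\mathcal{C}(C[-1],W_S)$ then injects into $\mathcal{C}(U_C[-2],W_S)=\mathrm{Ext}^2(U_C,W_S)$, which the cotorsion axioms do not control. (In the cluster-tilting case $\mathcal{U}=\mathcal{V}=\mathcal{T}$, $\mathcal{H}=\mathcal{C}$, one has $\mathcal{C}(C[-1],\mathcal{T})\ne 0$ for any $C\notin\mathcal{T}$.) What is missing is a \emph{second} use of the epi hypothesis: by Proposition~\ref{PropEpim} one has $\mathcal{C}(M_g,\mathcal{V}[1])=0$, and since $h\colon C\to A[1]$ factors through $m_g$ (as in the proof of Lemma~\ref{LemSection}), the composite $v_A[1]\circ h=0$. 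In your octahedron this is exactly the connecting map $\delta\colon C\to V_A[1]$, so the triangle $V_A\to M_f\xrightarrow{\alpha}C\to V_A[1]$ splits; picking a section of $\alpha$ and composing with $c$ produces the required $t\colon C\to S$ with $\underline{t}\circ\underline{g}=\underline{s}$, the difference $s-t\circ g$ factoring through $V_A\in\mathcal{W}$. The paper carries out the same idea without the detour through $M_f$: it factors $s\circ f$ through $V_A$, builds a morphism of triangles $(A,B,C)\to(V_A,S,T)$, uses $v_A[1]\circ h=0$ to lift $C\to T$ along $S\to T$, and concludes as above.
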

\begin{proof}
Since $\underline{\mathcal{H}}$ is an additive category with (co-)kernels (Corollary \ref{CorCoKerExist}), it remains to show the following:
\begin{enumerate}
\item If $\underline{g}$ is epimorphic in $\underline{\mathcal{H}}$, then $\underline{g}$ is a cokernel of some morphism in $\underline{\mathcal{H}}$.
\item If $\underline{g}$ is monomorphic in $\underline{\mathcal{H}}$, then $\underline{g}$ is a kernel of some morphism in $\underline{\mathcal{H}}$.
\end{enumerate}
Since {\rm (2)} can be shown dually, we only show {\rm (1)}.

Complete $g$ into a distinguished triangle
\[ A\overset{f}{\longrightarrow}B\overset{g}{\longrightarrow}C\overset{h}{\longrightarrow}A[1] \]
in $\mathcal{C}$.
By Lemma \ref{LemAinC-}, we have $A\in\mathcal{C}^-$.
By Proposition \ref{PropReflection}, there exists
\[ z_A\colon A\rightarrow Z_A \]
which gives a reflection $\underline{z_A}:A\rightarrow Z_A$ of $A$ along $\underline{\mathcal{C}}^+\hookrightarrow\underline{\mathcal{C}}$.
Moreover by Lemma \ref{LemHeart}, $Z_A$ belongs to $\mathcal{H}$.
Since $B$ belongs to $\mathcal{C}^+$, there exists $b\in\mathcal{C}(Z_A,B)$ such that $b\circ z_A=f$.
\[
\xy
(10,0)*+{A}="2";
(30,0)*+{B}="4";
(20,-12)*+{Z_A}="6";
(20,2)*+{}="10";
{\ar^{f} "2";"4"};
{\ar_{z_A} "2";"6"};
{\ar_{b} "6";"4"};
{\ar@{}|\circlearrowright "6";"10"};
\endxy
\]
We claim that $\underline{g}=\mathrm{cok}(\underline{b})$.

Let $S$ be any object in $\mathcal{H}$, and $s\in\mathcal{C}(B,S)$ be any morphism.
By Proposition \ref{PropReflection}, $\underline{s}\circ\underline{f}=0$ if and only if $\underline{s}\circ\underline{b}=0$.
In particular, $\underline{g}\circ\underline{b}=0$.

So it suffices to show for any $s$ satisfying $\underline{s}\circ\underline{f}=0$, there uniquely exists $\underline{c}\in\underline{\mathcal{C}}(C,S)$ such that $\underline{c}\circ\underline{g}=\underline{s}$.

Uniqueness immediately follows from the fact that $\underline{g}$ is epimorphic. So it remains to show the existence of $\underline{c}$.
Since $A\in\mathcal{C}^-$, there exists a distinguished triangle
\[ U_A[-1]\overset{u_A}{\longrightarrow}A\overset{v_A}{\longrightarrow}V_A\rightarrow U_A \]
with $U_A\in\mathcal{U},V_A\in\mathcal{W}$.
By assumption we have $\underline{s}\circ\underline{f}\circ\underline{u_A}=0$, and this means $s\circ f\circ u_A=0$ by Proposition \ref{PropVanishLift}.
So $s\circ f$ factors through $V_A$, and we obtain a morphism of triangles
\[
\xy
(-24,8)*+{A}="0";
(-8,8)*+{B}="2";
(8,8)*+{C}="4";
(24,8)*+{A[1]}="6";
(-24,-8)*+{V_A}="10";
(-8,-8)*+{S}="12";
(8,-8)*+{T}="14";
(24,-8)*+{V_A[1]}="16";
{\ar_{v_A} "0";"10"};
{\ar^{s} "2";"12"};
{\ar^{t} "4";"14"};
{\ar^{-v_A[1]} "6";"16"};
{\ar^{f} "0";"2"};
{\ar_{i} "10";"12"};
{\ar^{g} "2";"4"};
{\ar_{j} "12";"14"};
{\ar^{h} "4";"6"};
{\ar "14";"16"};
{\ar@{}|\circlearrowright "0";"12"};
{\ar@{}|\circlearrowright "2";"14"};
{\ar@{}|\circlearrowright "4";"16"};
\endxy
\]
where $V_A\overset{i}{\longrightarrow}S\overset{j}{\longrightarrow}T\rightarrow V_A[1]$ is a distinguished triangle.
By the same argument as in the proof of Lemma \ref{LemSection} (since $\mathcal{C}(M_g,V_A[1]=0)$), we have $v_A[1]\circ h=0$.
Thus $t$ factors through $S$, namely there exists $c\in\mathcal{C}(C,S)$ such that $j\circ c=t$.
Since $j\circ(s-c\circ g)=j\circ s-t\circ g=0$, there exists $s^{\prime}\in\mathcal{C}(B,V_A)$ such that
\begin{eqnarray*}
i\circ s^{\prime}=s-c\circ g. \\
\xy
(-26,-12)*+{V_A}="0";
(-10,0)*+{B}="2";
(-10,-12)*+{S}="4";
(6,0)*+{C}="12";
(6,-12)*+{T}="14";
(-6,-4)*+{}="15";
(-24,9)*+{}="17";
{\ar_{s^{\prime}} "2";"0"};
{\ar^{g} "2";"12"};
{\ar_{i} "0";"4"};
{\ar_{s} "2";"4"};
{\ar_{j} "4";"14"};
{\ar_{c} "12";"4"};
{\ar^{t} "12";"14"};
{\ar@{}|\circlearrowright "14";"15"};
\endxy
\end{eqnarray*}
Since $V_A\in\mathcal{W}$, this means $\underline{s}=\underline{c}\circ\underline{g}$.
\end{proof}

\section{Existence of enough projectives/injectives}

\begin{lem}\label{ProjLem1}
For any cotorsion pair $(\mathcal{U},\mathcal{V})$, the following are equivalent.
\begin{enumerate}
\item $\mathcal{U}\subseteq\mathcal{V}$.
\item $\mathcal{W}=\mathcal{U}$.
\item $\mathcal{C}^+=\mathcal{C}$.
\end{enumerate}
\end{lem}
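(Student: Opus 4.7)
The plan is to prove the equivalences by going (1) $\Leftrightarrow$ (2) directly from definitions, and then (1) $\Leftrightarrow$ (3) using Corollary \ref{CorC+}, which characterizes membership in $\mathcal{C}^+$ by the condition that the $\mathcal{U}$-term in a distinguished triangle $U \to C \to V[1] \to U[1]$ must lie in $\mathcal{W}$.

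For the equivalence of (1) and (2), since $\mathcal{W} = \mathcal{U} \cap \mathcal{V}$, the identity $\mathcal{W} = \mathcal{U}$ is literally the same as $\mathcal{U} \subseteq \mathcal{V}$. So this part is immediate.

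For (1) $\Rightarrow$ (3), I would take an arbitrary $C \in \mathcal{C}$ and, by Definition \ref{DefDivide}(2), form a distinguished triangle $U \to C \to V[1] \to U[1]$ with $U \in \mathcal{U}$ and $V \in \mathcal{V}$. Since $\mathcal{U} \subseteq \mathcal{V}$, we get $U \in \mathcal{U} \cap \mathcal{V} = \mathcal{W}$, and condition (1) of Corollary \ref{CorC+} is satisfied, so $C \in \mathcal{C}^+$. Hence $\mathcal{C}^+ = \mathcal{C}$.

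For (3) $\Rightarrow$ (1), I would take any $U \in \mathcal{U}$ and apply the hypothesis $U \in \mathcal{C}^+$ to the trivial distinguished triangle
\[ U \xrightarrow{\mathrm{id}} U \longrightarrow 0 \longrightarrow U[1], \]
in which $0 \in \mathcal{V}$ since $\mathcal{V}$ is additive. By condition (2) of Corollary \ref{CorC+}, this forces $U \in \mathcal{W} \subseteq \mathcal{V}$, giving $\mathcal{U} \subseteq \mathcal{V}$. No real obstacle is expected here, as the entire argument is a direct unwinding of Corollary \ref{CorC+} together with the axioms of a cotorsion pair; the only subtlety is remembering to invoke the trivial triangle for objects of $\mathcal{U}$ in the (3) $\Rightarrow$ (1) direction.
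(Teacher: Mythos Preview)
Your proof is correct; the paper itself leaves this lemma to the reader, so there is no ``paper's own proof'' to compare against, and your argument via Corollary~\ref{CorC+} and the trivial triangle is exactly the kind of direct unwinding the author had in mind.
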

\begin{proof}
Left to the reader.
\end{proof}

\begin{cor}\label{ProjCor2}
If a cotorsion pair $(\mathcal{U},\mathcal{V})$ satisfies $\mathcal{U}\subseteq\mathcal{V}$, then we have $\mathcal{U}[-1]\subseteq\mathcal{H}$.
\end{cor}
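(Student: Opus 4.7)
The plan is straightforward: combine Lemma \ref{ProjLem1} with Remark \ref{RemRem}(3). Recall that $\mathcal{H}=\mathcal{C}^+\cap\mathcal{C}^-$ by Definition \ref{DefHeart}, so it suffices to verify both $\mathcal{U}[-1]\subseteq\mathcal{C}^+$ and $\mathcal{U}[-1]\subseteq\mathcal{C}^-$.

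First I would invoke Lemma \ref{ProjLem1}: the hypothesis $\mathcal{U}\subseteq\mathcal{V}$ is precisely condition (1) there, which is equivalent to (3), namely $\mathcal{C}^+=\mathcal{C}$. In particular every object of $\mathcal{U}[-1]$ lies in $\mathcal{C}^+$, giving the first inclusion for free.

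For the second inclusion $\mathcal{U}[-1]\subseteq\mathcal{C}^-$, I would appeal directly to Remark \ref{RemRem}(3). For completeness one can unpack it: given $U\in\mathcal{U}$, the split distinguished triangle
\[ U[-1]\overset{\mathrm{id}}{\longrightarrow}U[-1]\rightarrow 0\rightarrow U \]
has the form required by condition (1) of Corollary \ref{CorC-}, with $U_0:=U\in\mathcal{U}$ and $W_0:=0\in\mathcal{W}$. Hence $U[-1]\in\mathcal{C}^-$ by definition of $\mathcal{C}^-$ (Definition \ref{DefC+-}).

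Combining the two inclusions yields $\mathcal{U}[-1]\subseteq\mathcal{C}^+\cap\mathcal{C}^-=\mathcal{H}$, as required. There is no real obstacle here: the content of the corollary is entirely encoded in the equivalence of conditions in Lemma \ref{ProjLem1} together with the general observation in Remark \ref{RemRem}(3); once those are in hand the statement is immediate.
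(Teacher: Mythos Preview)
Your proof is correct and follows essentially the same approach as the paper: the paper's proof simply cites $\mathcal{U}[-1]\subseteq\mathcal{C}^-$ from Remark \ref{RemRem} and $\mathcal{C}^+=\mathcal{C}$ from Lemma \ref{ProjLem1}, which is exactly what you do (with a bit of extra unpacking of the remark).
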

\begin{proof}
This immediately follows from $\mathcal{U}[-1]\subseteq\mathcal{C}^-$ (Remark \ref{RemRem}) and $\mathcal{C}=\mathcal{C}^+$ (Lemma \ref{ProjLem1}).
\end{proof}

\begin{prop}\label{ProjProp3}
Let $(\mathcal{U},\mathcal{V})$ be a cotorsion pair satisfying $\mathcal{U}\subseteq\mathcal{V}$. If an object $P\in\mathcal{H}$ lies in the image of $\mathcal{U}[-1]$ \ $($i.e. $P\in\mathcal{U}[-1]/\mathcal{W}$ $)$, then $P$ is projective in $\underline{\mathcal{H}}$.
\end{prop}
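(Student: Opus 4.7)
The plan is to verify the lifting property of a projective object directly in $\mathcal{C}$. Write $P=U[-1]$ with $U\in\mathcal{U}$, fix an epimorphism $\underline{g}\colon B\to C$ in $\underline{\mathcal{H}}$ and an arbitrary $\underline{s}\colon P\to C$; I must produce $\underline{t}\colon P\to B$ with $\underline{g}\circ\underline{t}=\underline{s}$. The first observation is that Proposition~\ref{PropVanishLift} gives $\underline{\mathcal{C}}(U[-1],X)=\mathcal{C}(U[-1],X)$ for every $X\in\mathcal{C}$, so it is enough to pick an honest representative $s\in\mathcal{C}(U[-1],C)$ and to produce $t\in\mathcal{C}(U[-1],B)$ with $g\circ t=s$; the quotient functor will then yield $\underline{t}$ with $\underline{g}\circ\underline{t}=\underline{s}$.

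Next I would complete $g$ to a distinguished triangle
\[
A\overset{f}{\longrightarrow}B\overset{g}{\longrightarrow}C\overset{h}{\longrightarrow}A[1].
\]
Applying $\mathcal{C}(U[-1],-)$ to this triangle, a preimage $t$ of $s$ in $\mathcal{C}(U[-1],B)$ exists if and only if $h\circ s=0$ in $\mathcal{C}(U[-1],A[1])$. To secure this vanishing I would invoke the construction of Lemma~\ref{LemConstofM} applied to $g$: starting from a triangle $U_B[-1]\overset{u_B}{\longrightarrow}B\to V_B\to U_B$ with $V_B\in\mathcal{W}$ that witnesses $B\in\mathcal{C}^-$, one obtains a distinguished triangle $U_B[-1]\to C\overset{m_g}{\longrightarrow}M_g\to U_B$, and the octahedral axiom supplies a factorisation $h=h'\circ m_g$ for some $h'\colon M_g\to A[1]$ (this is precisely the ``$h$ factors through $m_g$'' remark used at the start of the proof of Lemma~\ref{LemSection}).

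The heart of the argument is then the identification of $M_g$. By Proposition~\ref{PropEpim}, the hypothesis that $\underline{g}$ is epimorphic is equivalent to $\mathcal{C}(M_g,\mathcal{V}[1])=\mathrm{Ext}^1(M_g,\mathcal{V})=0$, which by the Ext-characterisation of $\mathcal{U}$ in the second remark following Definition~\ref{DefDivide} forces $M_g\in\mathcal{U}$. This is exactly where the hypothesis $\mathcal{U}\subseteq\mathcal{V}$ enters: it upgrades $M_g\in\mathcal{U}$ to $M_g\in\mathcal{V}$, so $\mathrm{Ext}^1(U,M_g)=\mathcal{C}(U[-1],M_g)=0$. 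Consequently $m_g\circ s=0$, hence $h\circ s=h'\circ m_g\circ s=0$; a lift $t\in\mathcal{C}(U[-1],B)$ therefore exists in $\mathcal{C}$, and its image $\underline{t}\in\underline{\mathcal{H}}(P,B)$ is the desired lift of $\underline{s}$.

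The main obstacle is the interplay between the two roles of the cotorsion pair: the epimorphism condition on $\underline{g}$ controls $M_g$ via the subcategory $\mathcal{V}$ (through Proposition~\ref{PropEpim}), while the source $U[-1]$ requires $\mathrm{Ext}^1$-vanishing against objects of $\mathcal{V}$. The assumption $\mathcal{U}\subseteq\mathcal{V}$ is precisely the bridge between these two constraints; everything else reduces to one application of the octahedral axiom and one long exact sequence, and no uniqueness statement is needed since projectivity is merely the existence of a lift.
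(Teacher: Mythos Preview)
Your argument is correct, but the paper takes a shorter route. Instead of constructing $M_g$ and invoking the octahedral axiom to factor $h$, the paper first uses Lemma~\ref{ProjLem1} to observe that $\mathcal{U}\subseteq\mathcal{V}$ forces $\mathcal{C}^+=\mathcal{C}$; Proposition~\ref{PropEpim} then upgrades ``$\underline{g}$ epimorphic in $\underline{\mathcal{H}}$'' to ``$\underline{g}$ epimorphic in $\underline{\mathcal{C}}$'', and Fact~\ref{FactFact} yields $\underline{h}=0$ directly. Thus $h$ factors through some $W\in\mathcal{W}\subseteq\mathcal{V}$, and $\mathcal{C}(U[-1],W)\subseteq\mathrm{Ext}^1(\mathcal{U},\mathcal{V})=0$ gives $h\circ s=0$ immediately. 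Both proofs ultimately show that $h$ factors through an object of $\mathcal{V}$---yours through $M_g$, the paper's through an object of $\mathcal{W}$---but the paper avoids the explicit $M_g$ construction and the octahedron by exploiting $\mathcal{C}^+=\mathcal{C}$. Your approach has the minor advantage of using the hypothesis $\mathcal{U}\subseteq\mathcal{V}$ only at the single step $M_g\in\mathcal{U}\Rightarrow M_g\in\mathcal{V}$, making its role very transparent; the paper's is simply quicker.
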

\begin{proof}
Let $B$ and $C$ be any objects in $\mathcal{H}$ and let $p\in\mathcal{C}(P,C)$ be any morphism.

Let $g\in\mathcal{C}(B,C)$ be any morphism which is epimorphic in $\underline{\mathcal{H}}$, and take a distinguished triangle
\[ A\overset{f}{\longrightarrow}B\overset{g}{\longrightarrow}C\overset{h}{\longrightarrow}A[1]. \]

By Proposition \ref{PropEpim} and Lemma \ref{ProjLem1}, $\underline{g}$ is epimorphic in $\underline{\mathcal{C}}$.
By Fact \ref{FactFact}, this is equivalent to $\underline{h}=0$.
Thus $h$ factors through some $W\in\mathcal{W}$.
Since $\mathcal{C}(P,W)\subseteq\mathrm{Ext}^1(\mathcal{U},\mathcal{V})=0$, we have $h\circ p=0$, and $p$ factors through $B$ as desired.
\end{proof}

\begin{cor}\label{ProjCor4}
If a cotorsion pair $(\mathcal{U},\mathcal{V})$ satisfies $\mathcal{U}\subseteq\mathcal{V}$, then its heart $\underline{\mathcal{H}}$ has enough projectives.
\end{cor}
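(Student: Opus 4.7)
The goal is to show that every object $H \in \underline{\mathcal{H}}$ admits an epimorphism from a projective object. By Proposition~\ref{ProjProp3}, the image of $\mathcal{U}[-1]$ in $\underline{\mathcal{H}}$ consists of projectives, so it suffices to produce, for each $H \in \mathcal{H}$, an object of $\mathcal{U}[-1]$ together with a morphism to $H$ that becomes an epimorphism in $\underline{\mathcal{H}}$.

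The plan is as follows. First, I invoke Lemma~\ref{ProjLem1} to record that under the hypothesis $\mathcal{U} \subseteq \mathcal{V}$ we have $\mathcal{W} = \mathcal{U}$ and $\mathcal{C}^+ = \mathcal{C}$, so in particular $\mathcal{H} = \mathcal{C}^-$ and $\mathcal{U}[-1] \subseteq \mathcal{H}$ by Corollary~\ref{ProjCor2}. Second, for any $H \in \mathcal{H} \subseteq \mathcal{C}^-$, Corollary~\ref{CorC-} provides a distinguished triangle
\[ U_0[-1] \overset{p}{\longrightarrow} H \overset{q}{\longrightarrow} W_0 \overset{r}{\longrightarrow} U_0 \]
with $U_0 \in \mathcal{U}$ and $W_0 \in \mathcal{W}$. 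The candidate projective cover is $P := U_0[-1]$, which lies in $\mathcal{H}$ by the first step.

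It remains to prove that $\underline{p} \colon P \to H$ is an epimorphism in $\underline{\mathcal{H}}$. By Proposition~\ref{PropEpim}, epimorphism in $\underline{\mathcal{H}}$ is equivalent to epimorphism in $\underline{\mathcal{C}}^+ = \underline{\mathcal{C}}$ (using $\mathcal{C}^+ = \mathcal{C}$). Rotating the triangle above, we obtain the distinguished triangle
\[ W_0[-1] \longrightarrow U_0[-1] \overset{p}{\longrightarrow} H \overset{q}{\longrightarrow} W_0, \]
to which I apply Fact~\ref{FactFact}(1): $\underline{p}$ is epimorphic in $\underline{\mathcal{C}}$ if and only if $\underline{q} = 0$. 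But $q \colon H \to W_0$ obviously factors through $W_0 \in \mathcal{W}$ itself, so $\underline{q} = 0$ and hence $\underline{p}$ is epimorphic.

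Combining this with Proposition~\ref{ProjProp3} gives that each $H \in \mathcal{H}$ receives an epimorphism from a projective object of $\underline{\mathcal{H}}$, establishing the corollary. I do not anticipate a serious obstacle; the only subtlety is the careful bookkeeping of which triangle rotation to feed into Fact~\ref{FactFact} so that the hypothesis $W_0 \in \mathcal{W}$ is positioned to force the relevant class to vanish.
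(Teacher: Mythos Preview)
Your proof is correct and follows essentially the same path as the paper's: for $H\in\mathcal{H}$ take the distinguished triangle $U[-1]\to H\to V\to U$ with $U\in\mathcal{U}$, observe that the second map vanishes in $\underline{\mathcal{C}}$ (you argue via $V\in\mathcal{W}$, the paper cites Lemma~\ref{LemDisApp}), conclude via Fact~\ref{FactFact} that $\underline{p}$ is epimorphic, and finish with Corollary~\ref{ProjCor2} and Proposition~\ref{ProjProp3}. The only cosmetic difference is that you pass through Proposition~\ref{PropEpim} and $\underline{\mathcal{C}}^+=\underline{\mathcal{C}}$, whereas the paper simply uses that an epimorphism in $\underline{\mathcal{C}}$ restricts to one in the full subcategory $\underline{\mathcal{H}}$.
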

\begin{proof}
By definition, for any $C\in\mathcal{H}$, there exists a distinguished triangle
\[ U[-1]\overset{u}{\longrightarrow}C\overset{v}{\longrightarrow}V\rightarrow U \]
with $U\in\mathcal{U}, V\in\mathcal{V}$.
Since $\underline{v}=0$ by $\underline{\mathcal{C}}(C,V)=0$ (Lemma \ref{LemDisApp}), $\underline{u}$ is epimorphic in $\underline{\mathcal{C}}$, and thus in $\underline{\mathcal{H}}$.
Thus Corollary \ref{ProjCor4} follows from Corollary \ref{ProjCor2} and Proposition \ref{ProjProp3}.
\end{proof}

Dually, we have the following.
\begin{cor}\label{ProjCor5}
If a cotorsion pair $(\mathcal{U},\mathcal{V})$ satisfies $\mathcal{V}\subseteq\mathcal{U}$, then its heart $\underline{\mathcal{H}}$ has enough injectives.
\end{cor}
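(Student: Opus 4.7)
The plan is to dualize the entire argument of Corollary \ref{ProjCor4}. Everything in the setup (the definition of a cotorsion pair, the subcategories $\mathcal{C}^{\pm}$, the quotient $\underline{\mathcal{C}}$, the hearts, Fact \ref{FactFact}, Proposition \ref{PropVanishLift}, etc.) is self-dual under the involution that swaps $\mathcal{U}\leftrightarrow\mathcal{V}$, swaps $[{-}1]\leftrightarrow[1]$, and swaps $\mathcal{C}^{+}\leftrightarrow\mathcal{C}^{-}$, so the three inputs feeding into Corollary \ref{ProjCor4} all have immediate dual counterparts.

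First I would record the dual of Lemma \ref{ProjLem1}: the hypothesis $\mathcal{V}\subseteq\mathcal{U}$ is equivalent to $\mathcal{W}=\mathcal{V}$ and to $\mathcal{C}^{-}=\mathcal{C}$. Combined with Remark \ref{RemRem}(2), this gives (dualizing Corollary \ref{ProjCor2}) the inclusion $\mathcal{V}[1]\subseteq\mathcal{H}$. The dual of Proposition \ref{ProjProp3} then asserts that any object of $\mathcal{H}$ lying in the image of $\mathcal{V}[1]$, i.e. in $\mathcal{V}[1]/\mathcal{W}$, is injective in $\underline{\mathcal{H}}$; its proof proceeds by completing any monomorphism $\underline{g}\colon B\rightarrowtail C$ in $\underline{\mathcal{H}}$ to a triangle, using Proposition \ref{PropEpim} (dual half) together with Fact \ref{FactFact}(2) to conclude $\underline{h}=0$, and then using $\mathcal{C}(W,I)\subseteq\mathrm{Ext}^{1}(\mathcal{U},\mathcal{V})=0$ for $I\in\mathcal{V}[1]$.

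Second, for any $C\in\mathcal{H}$, I would invoke condition (2) of Definition \ref{DefDivide} to produce a distinguished triangle
\[
U\xrightarrow{\ u\ }C\xrightarrow{\ v\ }V[1]\longrightarrow U[1]
\]
with $U\in\mathcal{U}$ and $V\in\mathcal{V}$. Because $C\in\mathcal{H}\subseteq\mathcal{C}^{+}$, Corollary \ref{CorC+} forces $U\in\mathcal{W}$, so $\underline{u}=0$ in $\underline{\mathcal{C}}$. By Fact \ref{FactFact}(2) this makes $\underline{v}\colon C\to V[1]$ monomorphic in $\underline{\mathcal{C}}$. Since $V[1]\in\mathcal{V}[1]\subseteq\mathcal{H}$, the morphism $\underline{v}$ lives in the full subcategory $\underline{\mathcal{H}}\subseteq\underline{\mathcal{C}}$ and is \emph{a fortiori} monomorphic there. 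Combined with the injectivity of $V[1]$ established in the previous paragraph, this exhibits, for every $C\in\mathcal{H}$, a monomorphism from $C$ into an injective object of $\underline{\mathcal{H}}$, proving that $\underline{\mathcal{H}}$ has enough injectives.

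There is no genuine obstacle: the only point that deserves a moment of thought is checking that the dualization of Proposition \ref{ProjProp3} really does go through unchanged, but since that proof uses only Proposition \ref{PropEpim} (whose monomorphism half is already stated), Fact \ref{FactFact}, and the defining vanishing $\mathrm{Ext}^{1}(\mathcal{U},\mathcal{V})=0$ (which is symmetric in $\mathcal{U}$ and $\mathcal{V}$ up to the direction of the shift), the dualization is completely formal.
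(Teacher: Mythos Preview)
Your proposal is correct and follows essentially the same route as the paper: the paper's proof is literally the single sentence ``Dually, we have the following,'' and what you have written is precisely the detailed dualization of Lemma~\ref{ProjLem1}, Corollary~\ref{ProjCor2}, Proposition~\ref{ProjProp3}, and Corollary~\ref{ProjCor4}. One tiny remark: in your sketch of the dual of Proposition~\ref{ProjProp3}, if you complete the monomorphism $\underline{g}\colon B\to C$ to a triangle $A\xrightarrow{f}B\xrightarrow{g}C\xrightarrow{h}A[1]$, then Fact~\ref{FactFact}(2) gives $\underline{f}=0$ (not $\underline{h}=0$); the rest of your argument then goes through verbatim.
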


\section{Example arising from an $n$-cluster tilting subcategory}

\begin{dfn}\label{ClusDef1}(\S\S 5.1 in \cite{K-R})
A full additive thick subcategory $\mathcal{T}\subseteq\mathcal{C}$ is an $n$-{\it cluster tilting subcategory} if it satisfies the following.
\begin{enumerate}
\item $\mathcal{T}$ is functorially finite.
\item An object $C\in\mathcal{C}$ belongs to $\mathcal{T}$ if and only if $\mathrm{Ext}^{\ell}(C,\mathcal{T})=0\ \ (0<{}^{\forall}\ell<n)$
\item An object $C\in\mathcal{C}$ belongs to $\mathcal{T}$ if and only if $\mathrm{Ext}^{\ell}(\mathcal{T},C)=0\ \ (0<{}^{\forall}\ell<n)$
\end{enumerate}
\end{dfn}

\begin{dfn}\label{ClusDef2}
Let $\mathcal{T}$ be an $n+1$-cluster tilting subcategory. For any pair of integers $i$ and $j$ satisfying $i\le j$, put
\begin{eqnarray*}
\mathcal{T}_{[i,j]}&:=&\{ C\in\mathcal{C}\mid\mathrm{Ext}^{\ell}(C,\mathcal{T})=0\ \ (j+1\le {}^{\forall}\ell\le n) \},\\
\mathcal{T}^{[i,j]}&:=&\{ C\in\mathcal{C}\mid\mathrm{Ext}^{\ell}(\mathcal{T},C)=0\ \ (-j\le {}^{\forall}\ell\le -i) \}.
\end{eqnarray*}
In particular, if $j-i\ge n$, then $\mathcal{T}_{[i,j]}=\mathcal{T}_{[i,j]}=\mathcal{C}$.
\end{dfn}

\begin{ex}\label{ClusEx3}
Let $\mathcal{T}$ be an $n+1$-cluster tilting subcategory. For any integer $\varpi$ satisfying $0\le\varpi\le n-1$, put
\begin{eqnarray*}
\mathcal{T}_{\varpi}&:=&\mathcal{T}_{[0,\varpi]},\\
\mathcal{T}^{\varpi}&:=&\mathcal{T}^{[\varpi,n-1]}.
\end{eqnarray*}
Then it can be shown that $(\mathcal{T}_{\varpi},\mathcal{T}^{\varpi})$ is a cotorsion pair (Theorem 3.1 in \cite{I-Y}).
We can calculate its heart $\underline{\mathcal{H}}_{\varpi}$ as follows.
\begin{eqnarray*}
\mathcal{C}^+=\mathcal{T}_{[\varpi,n]},\ \ \mathcal{C}^-=\mathcal{T}^{[-1,\varpi]},\ \ \mathcal{W}=\mathcal{T}[\varpi],\\
\underline{\mathcal{H}}_{\varpi}=(\mathcal{T}_{[\varpi,n]}\cap\mathcal{T}^{[-1,\varpi]})/\mathcal{T}[\varpi].
\end{eqnarray*}
Thus $(\mathcal{T}_{[\varpi,n]}\cap\mathcal{T}^{[-1,\varpi]})/\mathcal{T}[\varpi]$ is an abelian category for each $\varpi\in [0,n-1]$, as shown in Corollary 6.4 in \cite{I-Y}.
If $n=1$ and $\varpi=0$, this is nothing other than the case of a cluster tilting subcategory.

Moreover in Corollary 6,4 in \cite{I-Y}, it was shown that these $(\mathcal{T}_{[\varpi,n]}\cap\mathcal{T}^{[-1,\varpi]})/\mathcal{T}[\varpi]$ are mutually equivalent for $\varpi\in [0,n-1]$. We abbreviate these equivalent abelian categories by $\underline{\mathcal{H}}$.

Since $(\mathcal{T}_0,\mathcal{T}^0)$ satisfies $\mathcal{T}_0=\mathcal{T}\subseteq\mathcal{T}^0$, we see that $\underline{\mathcal{H}}$ has enough projectives by Corollary \ref{ProjCor4}. Dually, since $(\mathcal{T}_{n-1},\mathcal{T}^{n-1})$ satisfies $\mathcal{T}^{n-1}=\mathcal{T}[n-1]\subseteq\mathcal{T}_{n-1}$, we see $\underline{\mathcal{H}}$ has enough injectives by Corollary \ref{ProjCor5}.
\end{ex}

\end{document}